\newcommand{\inte}[2][X]{\mathrm{int}_{#1}(#2)}
\newcommand{\cl}[2][X]{\mathrm{cl}_{#1}(#2)}
\newcommand{\bd}[2][X]{\mathrm{bd}_{#1}(#2)}
\newcommand{\pair}[1]{{\langle #1 \rangle}}
\renewcommand{\c}{\mathfrak{c}}
\newcommand{\B}{\mathcal{B}}
\newcommand{\V}{\mathcal{V}}
\newcommand{\D}{\mathcal{D}}
\newcommand{\G}{\mathcal{G}}
\newcommand{\covM}{\mathrm{cov}(\mathcal{M})}
\newcommand{\cofM}{\mathrm{cof}(\mathcal{M})}
\newcommand{\Pint}{\mathfrak{p}}
\newcommand{\class}{\mathbf{K}}
\newcommand{\poset}{\mathbb{P}}
\renewcommand{\d}{\mathfrak{d}}
\newcommand{\baire}{{}\sp\omega\omega}
\newcommand{\en}{\mathcal{N}}
\newcommand{\ctble}{\textbf{ctble}}
\newcommand{\sigmacent}{\sigma\textbf{-cent}}
\newcommand{\MA}[2][\kappa]{\mathbf{MA}_{#2}(#1)}
\newcommand{\PFA}{\mathbf{PFA}}
\newcommand{\remcard}{\mathbf{\chi_R}}
\newcommand{\farcard}{\mathbf{\chi_F}}
\newcommand{\Q}{\mathbb{Q}}
\newcommand{\cantor}{{}\sp{\omega}2}
\newcommand{\cantortree}{{}\sp{<\omega}2}
\newcommand{\model}{\mathbf{M}}
\newcommand{\random}{\mathbb{B}}
\newtheoremstyle{theorem}
     {11pt}
     {11pt}
     {}
     {}
     {\bfseries}
     {}
     {.5em}
     {\noindent\thmnumber{#2}. \thmname{#1}\thmnote{#3}}
\theoremstyle{theorem}
\newtheorem{ques}{Question}[section]
\newtheorem{lemma}[ques]{Lemma}
\newtheorem{propo}[ques]{Proposition}
\newtheorem{thm}[ques]{Theorem}
\newtheorem{coro}[ques]{Corollary}
\title{Far points and discretely generated spaces}
\author[Dow]{Alan Dow}
\author[Hern\'andez-Guti\'errez]{Rodrigo Hern\'andez-Guti\'errez}
\address{Department of Mathematics and Statistics, University of North Carolina at Charlotte, Charlotte NC 28223}
\email[Dow]{adow@uncc.edu}
\email[Hern\'andez-Guti\'errez]{rodrigo.hdz@gmail.com}
\subjclass[2010]{54D35, 54A25, 54G12, 54D80, 03E10, 03E75}
\keywords{discretely generated, far point, remote point, proper forcing axiom, countable tightness, character}
\date{\today}
\begin{document}

\maketitle

\begin{abstract}
We give a partial solution to a question by Alas, Junqueria and Wilson by proving that under $\PFA$ the one-point compactification of a locally compact, discretely generated and countably tight space is also discretely generated. After this, we study the cardinal number given by the smallest possible character of remote and far sets of separable metrizable spaces. Finally, we prove that in some cases a countable space has far points.
\end{abstract}

\section*{}

All spaces under discussion will be Tychonoff.\vskip6pt

Let $X$ be any space. We say that $X$ is \emph{discretely generated} if for every $A\subset X$ and $p\in\cl{A}$ there is a discrete subset $D\subset A$ such that $p\in\cl{D}$. Also, $X$ is \emph{weakly discretely generated} if every non-closed subset $A$ in a space $X$ contains a discrete subset $D\subset A$ such that $\cl{D}\setminus A\neq\emptyset$. These notions were defined in \cite{disc_gen_first} and have been a subject of study lately. First countable spaces are clearly discretely generated and in fact spaces with relatively rich structures are discretely generated. For example, it has been shown that countable products and box products of monotonically normal spaces are discretely generated (\cite{disc_gen_box}, \cite{alas_junq_wil_class_disc_gen}). Also, notice that the definitions of discretely generated and weakly discretely generated spaces are similar to the classic notions of Frech\'et-Urysohn spaces and sequential spaces, respectively, with the advantage that there is no bound for tightness (see Example 3.5 and Proposition 3.6 in \cite{disc_gen_first}).

If one focuses on compact spaces, there are interesting things to say. Although there are compact spaces that are not discretely generated \cite[Example 4.3]{disc_gen_first}, all compact spaces are weakly discretely generated \cite[Proposition 4.1]{disc_gen_first}. Also, a dyadic compactum is discretely generated if and only if it is metrizable \cite[Theorem 2.1]{disc_gen_box}. In this paper, we focus on the following question by Alas, Junqueira and Wilson.

\begin{ques}\cite{alas_junq_wil_class_disc_gen}\label{theques}
Let $X$ be a locally compact and discretely generated space. Is the one-point compactification of $X$ also discretely generated?
\end{ques}

So far, this question has been answered consistently in the negative. First countable counterexamples have been constructed using CH \cite{alas_junq_wil_class_disc_gen} or the existence of a Souslin line \cite{aurichi}. Later, it was shown in \cite{hg-remotediscretelygen} that a similar construction could be carried out assuming the cardinal equation $\mathfrak{p}={cof}(\mathcal{M})$; the resulting space has an open dense set homeomorphic to $\omega\times{}\sp{\omega}2$ and the rest of the space is homeomorphic to the ordinal space $\mathfrak{p}$.

It is the objective of this paper to show that it is consistent that Question \ref{theques} has a partial affirmative answer. 

\begin{thm}\label{main}
Assume $\PFA$. Then every locally compact discretely generated space of countable tightness has its one-point compactification discretely generated.
\end{thm}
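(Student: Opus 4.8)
The plan is to isolate the point at infinity and recast discrete generation there as a statement about unbounded sets. Write $X^{*}=X\cup\{\infty\}$ for the one-point compactification. First I would observe that discrete generation at any $p\in X$ is automatic: since $X$ is locally compact it is open in $X^{*}$, so $p$ has a neighbourhood base lying in $X$, whence $p\in\cl[X^{*}]{A}$ forces $p\in\cl{A\cap X}$, and a discrete $D\subseteq A\cap X$ witnessing discrete generation of $X$ at $p$ also works in $X^{*}$. Thus everything reduces to the point $\infty$. Since the neighbourhoods of $\infty$ are the complements of compact subsets of $X$, for $A\subseteq X$ we have $\infty\in\cl[X^{*}]{A}$ exactly when $A$ is not contained in any compact set, and a discrete $D\subseteq A$ satisfies $\infty\in\cl[X^{*}]{D}$ exactly when $\cl{D}$ is non-compact. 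So the theorem amounts to: every non-relatively-compact $A\subseteq X$ contains a discrete subspace that is again not relatively compact.

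Next I would set up the proof by contradiction and locate the genuine difficulty. Suppose $A\subseteq X$ is not relatively compact yet every discrete $D\subseteq A$ has $\cl{D}$ compact. The naive attempt, taking a maximal discrete subset, fails: a union of a chain of discrete sets need not be discrete, so Zorn's lemma does not apply, and the recursion building a discrete set is obstructed precisely at limit stages, where previously chosen points may accumulate back onto an earlier one. This is exactly the phenomenon that the known $\mathfrak{p}=\cofM$ counterexamples exploit, and it is why the statement is not a theorem of ZFC. Countable tightness of $X$ is the hypothesis that should tame it: passing to the compact space $K=\cl[X^{*}]{A}$, every free sequence in $A$ is discrete and hence, by assumption, bounded, so free sequences in $A$ can only cluster at points of $X$, where tightness is countable. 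I would use this to argue that the obstruction to building an unbounded discrete set is concentrated in structures of size $\aleph_1$, namely an unbounded $\omega_1$-indexed family together with the countable sets that witness the offending accumulations, which is the kind of object $\PFA$ is designed to defeat.

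The heart of the argument is then a proper forcing. I would let conditions be pairs $\pair{s,\mathcal{N}}$ where $s$ is a finite discrete subset of $A$ chosen to be \emph{escaping} (each new point lying outside a compact set that contains the closure of the previous ones) and $\mathcal{N}$ is a finite $\in$-chain of countable elementary submodels of some $H(\theta)$ serving as side conditions. The generic object would be an unbounded discrete $D\subseteq A$, contradicting the assumption. Properness, that is, the ability to extend any condition below a model $M\in\mathcal{N}$ while keeping the new point from accumulating onto the finitely many points already chosen, is exactly where countable tightness and local compactness enter: if a point of $s$ were in danger of becoming a limit of the generic set, countable tightness would place a countable witnessing subset inside $M$, and local compactness would let $M$ supply a compact set separating that point, so a legal extension avoiding the trap can be found inside $M$. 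A suitable family of $\aleph_1$ dense sets, one forcing unboundedness past each member of a fixed cofinal structure and one meeting each relevant model, then yields via $\PFA$ a filter whose union is the desired unbounded discrete set.

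The main obstacle I anticipate is the verification of properness, specifically showing that the escaping and non-accumulation requirements can always be met simultaneously below a side-condition model. This is where the full strength of countable tightness together with the compactness of $K$ must be orchestrated, and where the argument genuinely needs $\PFA$ rather than mere $\mathbf{MA}_{\aleph_1}$: the delicate point is ensuring that the countable sets witnessing potential accumulations are correctly reflected into the models, so that an obstruction appearing generically is already visible, and defeatable, inside each countable submodel.
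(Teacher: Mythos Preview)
Your reduction to the point at infinity is correct and matches the paper, and you are right that the core of the argument is a proper forcing with models as side conditions. However, two substantive ingredients are missing from your sketch.

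First, the paper does not run a single forcing; it splits into two cases according to whether some countable subset of $A$ already accumulates at $\infty$. In the countable case (Proposition~\ref{casecountable}) one does \emph{not} need $\PFA$ at all: after producing a free $\omega_1$-sequence $\{x_\alpha\}$ in $K$ whose only complete accumulation point is $\infty$ (Lemma~\ref{freeseq}), one approximates each $x_\alpha$ by a countable sequence from $A$, uses a $\sigma$-centered poset (Lemma~\ref{sequentially}, so only $\Pint>\omega_1$) to make each such sequence converge to a $G_\delta$ set $Y_\alpha$, and then uses a countable poset (so only $\covM>\omega_1$) to thread a single diagonal function through all of them. The resulting discrete set clusters at a new free $\omega_1$-sequence and hence at $\infty$. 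Your sketch does not isolate this case, and your single forcing would have to subsume it.

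Second, and more seriously, in the genuinely $\PFA$ case (Proposition~\ref{caseuncountable}) the forcing you describe is not obviously proper, and the paper's proof uses a device you do not mention: a \emph{point-countable $\pi$-base} $\B$ of $K$ (available for compact spaces of countable tightness by Shapirovski\u{\i}--Todor\v{c}evi\'c). Conditions are not just finite discrete ``escaping'' sets, but finite sets of pairs $\pair{a,B}$ with $a\in B\in\B$, together with the separation requirement that a pair $\pair{a,B}$ lying outside a side-condition model $N$ has $a\notin B'$ for \emph{every} $B'\in\B$ containing any $a'\in A\cap N$. Point-countability of $\B$ is exactly what makes this compatible with density and with properness: given finitely many external points $a_i$, only countably many members of $\B$ contain any of them, so inside the model one can choose replacements whose $B$-coordinates avoid that countable set; a compactness argument in the finite power ${}^{T}K$ then produces a compatible condition in $M$. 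Your ``escaping'' condition, by contrast, gives no handle on why a condition inside $M$ can be found that is compatible with arbitrary external points, and your appeal to ``countable tightness places a countable witness inside $M$'' does not supply the combinatorial finiteness needed for the reflection step. Without the point-countable $\pi$-base (or an equivalent device), the properness verification you flag as the main obstacle does not go through.
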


So, under $\PFA$, there are no first countable counterexamples to Question \ref{theques} but certainly the example from \cite{hg-remotediscretelygen} does exist. Thus, Question \ref{theques} is still open.

It is interesting to notice that all counterexamples to Question \ref{theques} mentioned above are constructed using the following procedure. Consider some simple space $X$ and find a point $p\in\beta X\setminus X$ that is remote (that is, $p$ is not in the closure of any nowhere dense subset of $X$) and has some additional property. Then some appropriate quotient of $\beta X\setminus\{p\}$ will be a counterexample to Question \ref{theques}. The counterexample from \cite{hg-remotediscretelygen} used a remote point of $\omega\times{}\sp{\omega}{2}$ with characer $\Pint$ (which, consistently, may not exist). For this reason, the second-named author asked in \cite{hg-remotediscretelygen} whether the minimal character of a remote set of a separable metrizable space is equal to any well-known small cardinal. In section \ref{sectionchar} of this paper we improve the previous existing bound given in \cite{hg-remotediscretelygen}, see Theorems \ref{notcofmeager} and \ref{farcardinals}. Although we were not able to describe this cardinal completely (Question \ref{remotecardquestion}), we did calculate the minimal character of a \emph{far} set of a separable metrizable space, see Corollary \ref{farchar}.

For our final section, let us comment a little on the history of remote points. The existence of remote points of separable metrizable spaces was a hard problem that was finally solved by Chae and Smith \cite{chae-smith} and, independently, by van Douwen \cite{vd51}. A restriction finally proved in \cite{dow-sep_no_remote} is that pseudocompact spaces have no remote points. For some values of $\kappa$, ccc non-pseudocompact spaces of $\pi$-weight $\kappa$ have been shown to have remote points: this was proved in \cite{chae-smith} and \cite{vd51} for $\kappa=\omega$,  in \cite{dow-remote_omega1} for $\kappa=\omega_1$ and in \cite{brown_dow}, assuming CH, for $\kappa=\omega_2$. Also, the spaces that have been constructed with no remote points (for example, see \cite{vd-vm-remote}, \cite{dow-sep_no_remote}, \cite{dow-peters-products_remote}) are not ccc. So the following question seems natural.

\begin{ques}\cite[Question 1]{dowquestions}
Does every ccc non-pseudocompact space have a remote point?
\end{ques}

We also have to consider that it is consistent that there is a separable space with no remote points \cite{dow-sep_no_remote}. For this reason, we find it is interesting to study remote and far points in countable and crowded spaces. In this paper we focus on the following question.

\begin{ques}
Does every countable and crowded space have a far point?
\end{ques}

This question is answered under some additional conditions: if the character and $\pi$-character coincide at every point (Theorem \ref{farpicharischar}), when the $\pi$-weight of the space has uncountable cofinality (Corollary \ref{farregular}) and when we assume that $\c\leq\aleph_{\omega+1}$ (Corollary \ref{farforsmall}). However, this problem remains open in its full generality.

\section{Preliminaries}

In a topological space $X$, given $A\subset X$, the closure of $A$ will be denoted by $\cl{A}$ and its boundary by $\bd{A}$. A standard reference for topological concepts is of course \cite{eng}.

The Cantor set is the set $\cantor$ of all functions $\omega\to2$. The Cantor tree is $\cantortree=\bigcup\{{}\sp{n}2:n<\omega\}$. Given $s\in\cantortree$, let $[s]=\{f\in\cantor:s\subset f\}$. The set $\{[s]:s\in\cantortree\}$ is a base for the topology of $\cantor$.

Recall that in any compact Hausdorff space, the character and the pseudocharacter of closed subsets are equal. The character of a point $p$ in a space $X$ will be denoted by $\chi(p,X)$.

For every Tychonoff space $X$, $\beta X$ denotes the \v Cech-Stone compactification of $X$ and $X\sp\ast=\beta X\setminus X$. A point in the \v Cech-Stone remainder $p\in X\sp\ast$ is called \emph{remote} (\emph{far}) if very time $N\subset X$ is a nowhere dense set (a closed discrete set, respectively) then $p\notin\cl[\beta X]{N}$. Extending the definition of remote point, we will say that $F\subset\beta X$ is a \emph{remote set} (\emph{far set}) of $X$ if $F\cap\cl[\beta X]{N}=\emptyset$ for every nowhere dense subset (closed discrete subset, respectively) $N$ of $X$.

In this paper we will use some some small uncountable cardinals. See \cite{vd62} or \cite{vaughan-small} for topological introductions, \cite{blass} for a recent survey in a set-theoretic perspective and \cite{bart} for information on consistency results. Basically, we will use $\Pint$, $\covM$, $\cofM$ and $\d$.

Let $\baire$ be the family of all functions from $\omega$ to $\omega$. The cardinal $\d$ is the size of the smallest family $\mathfrak{G}\subset\baire$ such that every time $f:\omega\to\omega$, there is $g\in\mathfrak{G}$ such that $\{n<\omega:g(n)<f(n)\}$ is finite. 

We will assume the terminology of posets such as dense subsets and filters and so on, see \cite[Section 1.4]{bart}. If $\class$ if a class of partially ordered sets and $\kappa$ is a cardinal number, $\MA[\class]{\kappa}$ is the following assertion:
\begin{quote}
If $\poset\in\class$ and $\{D_\alpha:\alpha<\kappa\}$ are dense subsets of $\poset$, there is a filter $G\subset\poset$ that intersects $D_\alpha$ for every $\alpha<\kappa$.
\end{quote}

Let $\ctble$ be the class of countable posets and $\sigmacent$ the class of $\sigma$-centered posets.

\begin{thm}\label{martins}
\begin{itemize}
\item[(a)] $\kappa<\Pint$ if and only if $\MA[\sigmacent]{\kappa}$. (\cite[14C, p. 25]{fremlin}) 
\item[(b)] $\kappa<\covM$ if and only if $\MA[\ctble]{\kappa}$. (\cite[Theorem 2.4.5]{bart})
\end{itemize}
\end{thm}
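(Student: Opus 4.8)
The plan is to prove each equivalence as a pair of one‑directional implications, recognizing part (a) as Bell's theorem and part (b) as the Cohen‑forcing characterization of $\covM$.

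For part (b) the strategy is to reduce every countable poset to Cohen forcing $\cantortree$. First I would prove $\MA[\ctble]{\kappa}\Rightarrow\kappa<\covM$: given $\kappa$ many meager sets, write each as a countable union of closed nowhere dense sets $N_{\alpha,n}$, and in the countable poset $\cantortree$ consider $D_{\alpha,n}=\{s\in\cantortree:[s]\cap N_{\alpha,n}=\emptyset\}$, which is dense precisely because $N_{\alpha,n}$ is nowhere dense. A filter meeting these $\kappa$ many dense sets determines a branch $c\in\cantor$ lying in some $[s]$ disjoint from each $N_{\alpha,n}$, hence outside every one of the given meager sets; so they fail to cover $\cantor$ and $\kappa<\covM$. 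For the converse $\kappa<\covM\Rightarrow\MA[\ctble]{\kappa}$, I would first observe that a countable poset either has an atom, below which a filter meeting all the $D_\alpha$ is immediate, or (after passing to its separative quotient) is forcing equivalent to $\cantortree$, so it suffices to treat Cohen forcing. For a dense $D\subseteq\cantortree$ the set $M_D=\{c\in\cantor:\forall n\,(c\restriction n\notin D)\}$ is nowhere dense, since its complement is open dense, so by $\kappa<\covM$ the union $\bigcup_{\alpha<\kappa}M_{D_\alpha}$ does not cover $\cantor$; any $c$ outside it meets every $D_\alpha$ and generates the desired filter.

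For part (a), the easy direction $\MA[\sigmacent]{\kappa}\Rightarrow\kappa<\Pint$ uses the standard poset adjoining a pseudo‑intersection. Given a family $\mathcal A$ of size $\kappa$ with the strong finite intersection property, let $\poset$ consist of pairs $(s,F)$ with $s\in[\omega]^{<\omega}$ and $F\in[\mathcal A]^{<\omega}$, ordered by $(s',F')\le(s,F)$ iff $s\subseteq s'$, $F\subseteq F'$ and $s'\setminus s\subseteq\bigcap F$. Conditions sharing their first coordinate are compatible and there are only countably many first coordinates, so $\poset$ is $\sigma$‑centered; the sets requiring $|s|\ge n$ (dense because finite intersections from $\mathcal A$ are infinite) and those requiring a fixed $A\in\mathcal A$ to enter $F$ are dense, and a filter meeting these $\kappa$ many dense sets yields an infinite pseudo‑intersection of $\mathcal A$. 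Hence $\kappa<\Pint$.

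The hard direction $\kappa<\Pint\Rightarrow\MA[\sigmacent]{\kappa}$ is Bell's theorem and is where the real work lies. Writing a $\sigma$‑centered poset as $\poset=\bigcup_n C_n$ with each $C_n$ centered, the plan is to code a generic filter as an infinite pseudo‑intersection of a carefully constructed family on the countable set of pairs $(F,n)$ with $F\in[C_n]^{<\omega}$: for each $D_\alpha$ one inserts a set $A_\alpha$ recording the promise to meet $D_\alpha$, together with finitely many structural sets forcing the pseudo‑intersection to decode to a genuine downward‑directed filter. The crux, and the step I expect to be the main obstacle, is verifying that this family has the strong finite intersection property; this is exactly where $\sigma$‑centeredness is indispensable, since finitely many density requirements can be met simultaneously only because they can be realized below a common lower bound inside a single centered piece $C_n$. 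Once the family has the strong finite intersection property and size $\kappa<\Pint$, it admits an infinite pseudo‑intersection, and reading this pseudo‑intersection back produces a filter meeting every $D_\alpha$.
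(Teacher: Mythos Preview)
Your outline is a correct sketch of the standard proofs (Bell's theorem for (a), the Cohen-forcing characterization for (b)), but there is nothing in the paper to compare it against: the paper does not prove this theorem at all. It is stated in the preliminaries as a known result, with proofs deferred entirely to the citations \cite[14C, p.~25]{fremlin} and \cite[Theorem 2.4.5]{bart}. So your proposal goes well beyond what the paper itself provides; the ``paper's own proof'' here is simply an appeal to the literature.
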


We will assume some knowledge of forcing and random reals for the proof of Lemma \ref{lemmarandom} and Theorem \ref{notcofmeager} and of course we will assume knowledge about $\PFA$ in the proof of the main Theorem \ref{main}.

\section{Discretely generated spaces}

In this section we will prove Theorem \ref{main}. We will split the proof in several steps to make it more transparent.

Recall that a free $\omega_1$-sequence in a space $X$ is an ordered subset $\{x_\alpha:\alpha<\omega_1\}$ such that for all $\beta<\omega_1$ we have $\cl{\{x_\alpha:\alpha<\beta\}}\cap \cl{\{x_\alpha:\beta\leq\alpha<\omega_1\}}=\emptyset$.

\begin{lemma}\label{freeseq}
Let $K$ be a compact space and $p\in K$ such that $K\setminus\{p\}$ is countably tight, $p$ is not isolated and $p$ is not in the closure of any countable discrete subset of $K$. Then there is a free $\omega_1$-sequence in $K$ such that $p$ is its only complete accumulation point.
\end{lemma}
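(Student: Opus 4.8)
The plan is to reduce the statement to the mere existence of a free $\omega_1$-sequence inside $K\setminus\{p\}$, and then to build such a sequence from a set witnessing that the tightness of $K$ at $p$ is uncountable.

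First I would isolate the following reduction: if $S=\{x_\alpha:\alpha<\omega_1\}\subseteq K\setminus\{p\}$ is any free $\omega_1$-sequence, then $p$ is its unique complete accumulation point. Since $S$ is free, its terms are distinct and $S$ is discrete, so $|S|=\omega_1$ and, $K$ being compact, $S$ has a complete accumulation point. Let $q$ be any complete accumulation point and suppose $q\neq p$. As $K\setminus\{p\}$ is countably tight and $q\in\cl[K]{S}$, there is a countable $S_0\subseteq S$ with $q\in\cl[K]{S_0}$; writing $\gamma=\sup\{\alpha:x_\alpha\in S_0\}+1<\omega_1$ we get $q\in\cl[K]{\{x_\alpha:\alpha<\gamma\}}$. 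On the other hand $\{x_\alpha:\alpha<\gamma\}$ is countable, so every neighbourhood of $q$ still meets $\{x_\alpha:\alpha\geq\gamma\}$ in $\omega_1$ points, whence $q\in\cl[K]{\{x_\alpha:\alpha\geq\gamma\}}$. This contradicts freeness at $\gamma$. Hence every complete accumulation point equals $p$, so $p$ is the unique one, and it suffices to produce a single free $\omega_1$-sequence missing $p$.

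To obtain the sequence I would first fix a witness for the uncountable tightness at $p$. Let $A$ be a maximal discrete subspace of $K\setminus\{p\}$. A maximal discrete set is dense in $K\setminus\{p\}$, and since $p$ is not isolated this yields $p\in\cl[K]{A}$. By hypothesis $p$ lies in the closure of no countable discrete set, so $A$ is uncountable and, crucially, $p\notin\cl[K]{B}$ for every countable $B\subseteq A$. Consequently $p\in\cl[K]{A\setminus\cl[K]{B}}$ for every such $B$, because $\cl[K]{A\cap\cl[K]{B}}\subseteq\cl[K]{B}$ does not contain $p$ while $p\in\cl[K]{A}$.

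Next I would run a transfinite recursion of length $\omega_1$ guided by a continuous increasing chain $\langle M_\alpha:\alpha<\omega_1\rangle$ of countable elementary submodels of some $H(\theta)$, with $K$, $A$, $p$ and the chain below $\alpha$ all in $M_{\alpha+1}$. At stage $\alpha$ the set $A\cap M_\alpha$ is countable, hence $p\notin\cl[K]{A\cap M_\alpha}$, so I may choose $x_\alpha$ to be the least element, in a fixed well-order, of $A\setminus\cl[K]{A\cap M_\alpha}$; definability guarantees $x_\alpha\in M_{\alpha+1}$. Then $x_\xi\in M_{\xi+1}\subseteq M_\alpha$ for $\xi<\alpha$, so each initial segment $L_\beta=\{x_\xi:\xi<\beta\}$ is contained in $A\cap M_\beta$ while every later point avoids $\cl[K]{A\cap M_\beta}$; using that $A$ is discrete one checks that $\{x_\alpha:\alpha<\omega_1\}$ is discrete and that $L_\beta$ is separated from its tail by the open set $K\setminus\cl[K]{A\cap M_\beta}$.

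The main obstacle is upgrading this separation of $L_\beta$ from the tail to the full freeness requirement $\cl[K]{L_\beta}\cap\cl[K]{\{x_\alpha:\alpha\geq\beta\}}=\emptyset$, that is, ruling out that the closure of an uncountable tail returns into $\cl[K]{L_\beta}$. This is exactly where countable tightness of $K\setminus\{p\}$ is indispensable: a point $q$ lying in both closures satisfies $q\neq p$ (since $L_\beta$ is countable and discrete, $p\notin\cl[K]{L_\beta}$), and by countable tightness $q$ is captured by a countable piece of the tail. The delicate part is to push this countable piece down to a bounded segment of the recursion so that it contradicts the separation already in force at an earlier level; arranging this at limit stages of countable cofinality is the technical heart, and it must be carried out using the elementarity of the models. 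Once freeness is secured, the reduction of the first paragraph identifies $p$ as the unique complete accumulation point and completes the proof.
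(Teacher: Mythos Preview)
Your opening reduction is correct and coincides with the paper's closing observation: once a free $\omega_1$-sequence in $K\setminus\{p\}$ is built, countable tightness off $p$ and compactness force $p$ to be its unique complete accumulation point.

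The gap is exactly where you locate it, and it is not filled. Your recursion chooses $x_\alpha\in A\setminus\cl[K]{A\cap M_\alpha}$; this guarantees only that each tail point lies outside $\cl[K]{A\cap M_\beta}$, i.e.\ a left-separation. It does \emph{not} prevent the closure of the tail from meeting $\cl[K]{L_\beta}$. Your proposed remedy---capture a hypothetical $q\in\cl[K]{L_\beta}\cap\cl[K]{\{x_\alpha:\alpha\geq\beta\}}$ by a countable piece of the tail and ``push it down''---yields no contradiction: that countable piece is already bounded, say by some $\gamma$, so $q\in\cl[K]{\{x_\alpha:\beta\leq\alpha<\gamma\}}$; but nothing recorded at levels $\leq\gamma$ forbids a limit of these points from landing back in $\cl[K]{A\cap M_\beta}$. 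The only separation in force is that individual tail points avoid $\cl[K]{A\cap M_\beta}$, which says nothing about their limits. Elementarity does not rescue this either: $q$ need not lie in any $M_\alpha$, and there is no closed set definable in $M_\beta$ separating $\cl[K]{L_\beta}$ from the (as yet unbuilt) tail.

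The paper bypasses this difficulty with a direct, much shorter recursion---no elementary submodels, no auxiliary discrete set. At stage $\beta$ the initial segment $\{x_\alpha:\alpha<\beta\}$ is countable and discrete, so the hypothesis gives $p\notin\cl[K]{\{x_\alpha:\alpha<\beta\}}$; choose an open $U_\beta\ni p$ with $\cl[K]{U_\beta}\cap\cl[K]{\{x_\alpha:\alpha<\beta\}}=\emptyset$, and pick $x_\beta\in\bigcap_{\alpha\leq\beta}U_\alpha\setminus\{p\}$, which is nonempty because $\psi(p,K)>\omega$ (countable pseudocharacter in a compactum would give first countability at $p$, hence a nontrivial convergent sequence, contradicting the hypothesis). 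Freeness is then automatic: the tail from $\beta$ lies in $U_\beta$, so its closure lies in $\cl[K]{U_\beta}$, already disjoint from $\cl[K]{\{x_\alpha:\alpha<\beta\}}$. The idea your construction is missing is precisely this: record at each stage a closed neighbourhood of $p$ separating it from the current initial segment, and force all later points inside it---then closures of tails are controlled for free, and the ``technical heart'' you anticipate never materialises.
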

\begin{proof}
We will recursively construct our free $\omega_1$-sequence $\{x_\alpha:\alpha<\omega_1\}$. We will also define a sequence of open neighborhoods $\{U_\alpha:\alpha<\omega_1\}$ of $p$ such that $\{x_\alpha:\beta\leq\alpha<\omega_1\}\subset U_\beta$ and $\{x_\alpha:\alpha<\beta\}\cap\cl[K]{U_\beta}=\emptyset$ for every $\beta<\omega_1$. Notice that this implies that the sequence constructed in this way is a free $\omega_1$-sequence.

In step $\beta<\omega_1$, assume that we have constructed $\{x_\alpha:\alpha<\beta\}$. Since $\{x_\alpha:\alpha<\beta\}$ is countable and discrete, $p$ cannot be in its closure. Thus, there is an open set $U_\beta$ with $p\in U_\beta$ and $\cl[K]{U_\beta}\cap\cl[K]{\{x_\alpha:\alpha<\beta\}}=\emptyset$. Notice that the pseudocharacter at $p$ is uncountable. Thus, choose a point $x_\beta\in\bigcap\{U_\alpha:\alpha\leq\beta\}$ such that $x_\beta\neq p$. This completes the construction.

Now, notice that by countable tightness, no point of $K\setminus\{p\}$ can be a complete accumulation point of $\{x_\alpha:\alpha<\beta\}$. However, by compactness this sequence must have a complete accumulation point, it must then be $p$.
\end{proof}

\begin{lemma}\label{needfreeseq}
Let $Y$ be a space and $p\in Y$ such that $Y\setminus\{p\}$ is countably tight and discretely generated. Assume that $A$ is dense in $Y$ and $p\notin A$. If $p$ is in the closure of a countable discrete subset of $Y$ then $p$ is also in the closure of some countable discrete subset of $A$.
\end{lemma}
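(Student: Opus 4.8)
The plan is to combine two ingredients: the information that discrete generation and countable tightness of $Y\setminus\{p\}$ give about every point \emph{other} than $p$, and the hypothesised countable discrete set, which serves as a ``countable ladder'' converging to $p$. So first I would record the following consequence of the structure of $Y\setminus\{p\}$: for every $q\in\cl[Y]{A}$ with $q\neq p$ there is a \emph{countable} discrete $E_q\subseteq A$ with $q\in\cl[Y]{E_q}$. Indeed, since $q\neq p$ we have $q\in\cl[Y]{A}\setminus\{p\}$, which is the closure of $A$ inside $Y\setminus\{p\}$; discrete generation there produces a discrete $E\subseteq A$ with $q\in\cl[Y]{E}$, and countable tightness then lets me thin $E$ to a countable discrete subset still having $q$ in its closure. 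This deals with all points except $p$ uniformly, and it is the only place discrete generation enters.

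Next I would reduce the lemma to manufacturing a single countable discrete set inside $\cl[Y]{A}$ that converges to $p$. Suppose I can find a countable discrete $D'=\{q_n:n<\omega\}\subseteq\cl[Y]{A}$ with $p\in\cl[Y]{D'}$ and with $p$ as its only accumulation point. Then for each $n$ I would pick an open $O_n\ni q_n$ witnessing discreteness and, using regularity together with the fact that $D'$ is closed discrete in $Y\setminus\{p\}$, arrange the family $\{O_n\}$ to be discrete in $Y\setminus\{p\}$. Applying the first paragraph inside $O_n$ gives a countable discrete $E_n\subseteq A\cap O_n$ with $q_n\in\cl[Y]{E_n}$, and setting $E=\bigcup_n E_n\subseteq A$ yields $D'\subseteq\cl[Y]{E}$, hence $p\in\cl[Y]{D'}\subseteq\cl[Y]{E}$, with $E$ countable. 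That $E$ is discrete is then forced: each $a\in E_n$ is isolated in $E_n$, and since the $O_n$ form a discrete family no other $E_m$ can accumulate at $a$. Securing the discreteness of $\{O_n\}$ (as opposed to mere discreteness of $D'$) is the delicate point of this step.

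The hard part will be producing $D'$ inside $\cl[Y]{A}$, because the given witness $D$ need not meet $\cl[Y]{A}$. If $p\in\cl[Y]{D\cap\cl[Y]{A}}$ I would simply thin $D\cap\cl[Y]{A}$ to a discrete set with $p$ as sole accumulation point and take it as $D'$. The genuinely difficult case is $p\notin\cl[Y]{D\cap\cl[Y]{A}}$: passing to a tail I may then assume $D\cap\cl[Y]{A}=\emptyset$, so $D$ lives in the open set $O=Y\setminus\cl[Y]{A}$ while $A$ lies in its complement, the two sets accumulate at $p$ ``from opposite sides'', and $p\in\bd[Y]{O}$. Here neither discrete generation (which never sees $p$) nor the witness $D$ by itself locates points of $A$ near $p$, and I expect this configuration to be the main obstacle. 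My attempt would be to exploit the discreteness of $D$ together with countable tightness of $Y\setminus\{p\}$ to interleave the two accumulations and extract the required countable discrete subset of $\cl[Y]{A}$ converging to $p$; this is precisely the step where the countability of the witness is essential, and controlling it is what should make the conclusion countable rather than merely discrete.
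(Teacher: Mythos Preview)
Your reduction is essentially the paper's own argument, and you have in addition put your finger on a genuine defect in the statement. The paper's proof simply takes the given discrete set $\{p_i:i<\omega\}$, separates the $p_i$ by pairwise disjoint open sets $U_i$, uses countable tightness together with discrete generation of $Y\setminus\{p\}$ to produce a countable discrete $S_i\subseteq A\cap U_i$ with $p_i\in\cl[Y]{S_i}$, and outputs $\bigcup_i S_i$. This is exactly your scheme once a discrete $D'\subseteq\cl[Y]{A}$ clustering at $p$ is in hand; note that pairwise disjoint open sets already force $\bigcup_i S_i$ to be discrete (if $x\in S_i$ is isolated in $S_i$ by $V$, then $V\cap U_i$ isolates it in the union), so your requirements that $\{O_n\}$ be a \emph{discrete} family and that $p$ be the unique accumulation point of $D'$ are unnecessary.

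The step you flag as ``genuinely difficult'' is in fact impossible in general: the lemma is false as stated. Let $Y$ be the one-point compactification of $\omega\sqcup\omega_1$, let $p$ be the point at infinity, set $A=\omega_1$ and $D=\omega$. Then $Y\setminus\{p\}$ is first countable (hence countably tight and discretely generated), $p\in\cl[Y]{A}\setminus A$, and $D$ is a countable discrete set with $p\in\cl[Y]{D}$; yet every countable subset of $A$ is bounded in $\omega_1$ and therefore misses a neighbourhood of $p$. The paper never meets this obstruction because in every application $A$ is first arranged to be dense in the ambient space, so the witnessing discrete points automatically lie in $\cl[Y]{A}$ and only your easy case arises. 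The correct fix is to add that hypothesis (or, equivalently, to assume the discrete witness already lies in $\cl[Y]{A}$) and then give the short argument above; do not try to salvage the general statement.
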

\begin{proof}
Let $\{p_i:i<\omega\}\subset Y$ be a discrete subset that clusters at $p$. For each $i<\omega$, let $U_i$ be an open set with $p_i\in U_i$ such that $U_i\cap U_j=\emptyset$ every time $i\neq j<\omega$. Then find a sequence $S_n\subset A\cap U_i$ that clusters at $p_i$ for each $i<\omega$. Then $\bigcup\{S_i:i<\omega\}$ is a discrete subset of $A$ that clusters at $p$.
\end{proof} 

\begin{lemma}\label{sequentially}
Let $K$ be a compact space, $T\subset K$ closed with character $<\Pint$ and $A\subset K$ such that $A$ is countable and $\cl[K]{A}\cap T\neq\emptyset$. Then there exists $B\subset A$ such that for every open set $U$ with $T\subset U$, the set $B\setminus U$ is finite. 
\end{lemma}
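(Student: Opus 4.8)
The plan is to exploit the small character of $T$ together with the pseudo-intersection property encoded in $\Pint$. Write $\lambda=\chi(T,K)<\Pint$ and fix a neighborhood base $\{U_\xi:\xi<\lambda\}$ of $T$ consisting of open sets, so that every open $U\supseteq T$ contains some $U_\xi$. It suffices to produce $B\subseteq A$ with $B\setminus U_\xi$ finite for every $\xi<\lambda$: given an arbitrary open $U\supseteq T$, pick $\xi$ with $U_\xi\subseteq U$, and then $B\setminus U\subseteq B\setminus U_\xi$ is finite.

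First I would fix a point $q\in\cl[K]{A}\cap T$ and split into two cases according to whether such a $q$ can be chosen to be an accumulation point of $A$. If every point of $\cl[K]{A}\cap T$ lies in $A$ and is isolated in $A$, then $\cl[K]{A}\cap T\subseteq A$ and I would simply take $B=\{q\}$ for such a $q$; since $q\in T\subseteq U$ for every open $U\supseteq T$, the set $B\setminus U$ is empty. This degenerate case needs no appeal to cardinal invariants.

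The substantive case is when some $q\in\cl[K]{A}\cap T$ is an accumulation point of $A$. Put $A_\xi=A\cap U_\xi$ for each $\xi<\lambda$. The key observation is that $\{A_\xi:\xi<\lambda\}$ has the strong finite intersection property: for $\xi_1,\ldots,\xi_n<\lambda$ the set $U_{\xi_1}\cap\cdots\cap U_{\xi_n}$ is an open neighborhood of $T$, hence contains $q$, and since $q$ is an accumulation point of $A$ (and $K$ is Hausdorff, hence $T_1$) this open set meets $A$ in infinitely many points; thus $A_{\xi_1}\cap\cdots\cap A_{\xi_n}$ is infinite. Because $\lambda<\Pint$, the defining property of $\Pint$ (equivalently $\MA[\sigmacent]{\lambda}$ from Theorem \ref{martins}(a)) yields an infinite pseudo-intersection $B$, i.e. an infinite $B\subseteq A$ with $B\setminus A_\xi$ finite for every $\xi$. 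Since $B\subseteq A$, we have $B\setminus U_\xi=B\setminus A_\xi$ finite, exactly as required, and this $B$ is a genuine infinite set converging to $T$.

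The main obstacle I anticipate is verifying the strong finite intersection property cleanly, which is precisely where the hypothesis $\cl[K]{A}\cap T\neq\emptyset$ and the choice of an accumulation point $q$ enter; once that is in hand, the conclusion is a direct application of the pseudo-intersection characterization of $\Pint$. A minor point to be careful about is the case distinction above, ensuring that when no accumulation point of $A$ lies in $T$ we still obtain a valid (if trivial, finite) $B$ from a point of $A\cap T$.
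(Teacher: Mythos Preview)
Your argument is correct and follows the same route as the paper: trace a neighborhood base of $T$ on $A$ to obtain the family $\{A_\xi\}$ and then use $\Pint$ to extract a pseudo-intersection. The only difference is packaging: the paper builds an explicit $\sigma$-centered (Mathias-style) poset and appeals to $\MA[\sigmacent]{\kappa}$, whereas you invoke the pseudo-intersection characterization of $\Pint$ directly; your case split also cleanly isolates the accumulation-point hypothesis that gives the \emph{strong} finite intersection property, a detail the paper's proof uses (for the density of $D_n$) but does not spell out.
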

\begin{proof}
Enumerate $A=\{x_n:n<\omega\}$ faithfully. Let $\{U_\alpha:\alpha<\kappa\}$ be a family of open sets such that $\kappa<\Pint$ and $T=\bigcap\{U_\alpha:\alpha<\kappa\}$. For each $\alpha<\kappa$, let $A_\alpha=\{n<\omega:x_n\in U_\alpha\}$.

Now, consider the poset $\poset$ of all $p=\pair{s_p,F_p}$, where
\begin{itemize}
\item[(a)] $s_p\in[\omega]\sp{<\omega}$, and
\item[(b)] $F_p\in[\kappa]\sp{<\omega}$.
\end{itemize}
and $q\leq p$ if
\begin{itemize}
\item[(c)] $s_p\subset s_q$,
\item[(d)] $F_p\subset F_q$, and
\item[(e)] for all $n\in s_q\setminus s_p$ and $\alpha\in F_p$, $n\in A_\alpha$
\end{itemize}

Let us prove that $\pair{\poset,\leq}$ is $\sigma$-centered. It is enough to show that given $s\in[\omega]\sp{<\omega}$, the set $\poset_s=\{p\in\poset:s_p=s\}$ is centered. Given $p,q\in\poset_s$, it is easy to see that $\pair{s,F_p\cup F_q}$ is an element of $\poset_s$ that extends both $p$ and $q$.

For each $n<\omega$, consider $D_n=\{p\in\poset:s_p\setminus n\neq\emptyset\}$ and for each $\alpha\in\kappa$, consider $E_\alpha=\{p\in\poset:\alpha\in F_p\}$. Let us argue why these sets are dense in $\poset$, let $p\in\poset$. Given $n<\omega$, since $\bigcap\{U_\alpha:\alpha\in F_p\}$ is a neighbordood of $T$, it is infinite so there is $r<\omega$ such that $x_r\in\bigcap\{U_\alpha:\alpha\in F_p\}$ and $r\geq n$. Then $\pair{s_p\cup\{r\},F_p}$ is an element of $D_n$ that extends $p$. Given $\alpha\in\kappa$, $\pair{s_p,F\cup\{\alpha\}}$ is an element of $E_\alpha$ that extends $p$.

Now, since $\kappa<\Pint$, there is a filter $G\subset\poset$ such that $G\cap D_n\neq\emptyset$ for each $n<\omega$ and $G\cap E_F$ for each $F\in[\kappa]\sp{<\omega}$. Let $B=\{x_n:n\in s_p\textrm{ for some }p\in G\}$. Since $G\cap D_n\neq\emptyset$ for each $n<\omega$, we get that $|B|=\omega$.

We claim that $B$ is the subset of $A$ we were looking for. Let $F\in[\kappa]\sp{<\omega}$, we must show that there is $m\in\omega$ such that $B\setminus\{x_n:n<m\}\subset\bigcap\{U_\alpha:\alpha\in F\}$. Let $p\in G\cap E_F$ and let $m=\max s_p+1$, we claim that this $m$ works. Let any $k\in\omega$ such that $k\geq m$ and $x_k\in B$. Then there is $q\in G$ such that $k\in s_q$. We may assume that $q\leq p$ so $k\in s_q\setminus s_p$. By property $(e)$, we obtain that $k\in A_\alpha$ for all $\alpha\in F$. Thus, $x_k\in\bigcap\{U_\alpha:\alpha\in F\}$. This is what we needed to finish the proof.
\end{proof}

Given a space $X$ and $A\subset X$ a closed subset, we will say that $S\subset X$ converges to $A$ if for every open set $U$ with $A\subset U$, the set $S\setminus U$ is finite. In this way, the conclusion of Lemma \ref{sequentially} can be stated by saying that the set $B$ converges to $T$.

The proof of the main Theorem \ref{main} can be basically done in two cases. We will first show how to solve these two specific cases in Propositions \ref{casecountable} and Theorem \ref{caseuncountable} and after this we will give the proof of Theorem \ref{main} as a corollary.

\begin{propo}\label{casecountable}
Assume that $\Pint>\omega_1$. Let $K$ be a compact space, $\infty\in K$ such that $K\setminus\{\infty\}$ is countably tight and $A\subset K\setminus\{\infty\}$ a countable dense subset of $K$. Then there is a discrete set $D\subset A$ such that $\infty\in\cl[K]{D}$.
\end{propo}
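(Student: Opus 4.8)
The plan is to translate the statement into the one-point-compactification picture and then manufacture the required discrete set by a pseudo-intersection argument driven by $\Pint>\omega_1$, using Lemmas \ref{freeseq} and \ref{sequentially} as the two engines. First I would dispose of the trivial case: if $p$ is isolated then density forces $p\in A$ and $D=\{p\}$ works, so assume $p$ is not isolated. Then $X=K\setminus\{p\}$ is locally compact and $K$ is its one-point compactification, with $p$ playing the role of the point at infinity; the nonempty open neighborhoods of $p$ are exactly the complements of the compact subsets of $X$. Under this translation the requirement $p\in\cl[K]{D}$ becomes ``$D$ is not contained in any compact subset of $X$,'' and it suffices to produce an infinite $D\subseteq A$ meeting every compact subset of $X$ in a finite set: such a $D$ is closed and discrete in $X$, hence not relatively compact, so $p\in\cl[K]{D}$.

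The main dichotomy is whether $p$ lies in the closure of some countable discrete subset of $K$. If it does, then $p$ is already a cluster point of a countable discrete $E\subseteq K$, and I would transfer $E$ into $A$: around each point of $E$ choose a neighborhood with compact closure, use density of $A$ to replace $E$ by points of $A$ in those neighborhoods, and invoke Lemma \ref{sequentially} on a closed set of character $\le\omega_1<\Pint$ assembled from these neighborhoods to thin the result down to a discrete subset of $A$ still clustering at $p$. If $p$ does \emph{not} lie in the closure of any countable discrete subset of $K$, then the hypotheses of Lemma \ref{freeseq} are met, and it yields a free $\omega_1$-sequence $\langle x_\alpha:\alpha<\omega_1\rangle$ with unique complete accumulation point $p$, together with neighborhoods $U_\beta\ni p$ separating each initial segment from $p$. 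The compact sets $C_\beta=\cl[K]{\{x_\alpha:\alpha<\beta\}}$ then all avoid $p$, so the traces $A\setminus C_\beta$ form a $\subseteq$-decreasing $\omega_1$-tower of infinite subsets of $A$; since $\Pint>\omega_1$, the same $\sigma$-centered technology as in Lemma \ref{sequentially} (equivalently $\MA[\sigmacent]{\omega_1}$ via Theorem \ref{martins}(a)) produces an infinite pseudo-intersection $D\subseteq A$ with $D\cap C_\beta$ finite for every $\beta$. Using the $U_\beta$ one checks that $D$ is discrete, and the aim is to show that $D$ must in fact cluster at $p$, producing a countable discrete subset of $K$ clustering at $p$ and contradicting the case assumption.

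I expect the main obstacle to be precisely the interaction with the character of $p$ in this last step. When $\chi(p,K)\le\omega_1$ the argument runs cleanly, because a cofinal family of compacta of size $\le\omega_1$ lets the pseudo-intersection $D$ escape \emph{every} compact set simultaneously, so $D$ is automatically closed discrete and $p\in\cl[K]{D}$. The difficulty is the case $\chi(p,K)>\omega_1$: the compacta $C_\beta$ coming from the free sequence need not be cofinal among all compacta, so ``$D\cap C_\beta$ finite for all $\beta$'' does not by itself force $D$ to leave every compact set, and one cannot diagonalize against all compacta with a single $\omega_1$-indexed object. Overcoming this is exactly where countable tightness of $K\setminus\{p\}$ must be exploited—through Lemma \ref{freeseq} to rule out the free-sequence obstruction and through the $\sigma$-centered poset of Lemma \ref{sequentially} to upgrade ``convergence to a small-character closed set'' into a genuine discrete set clustering at $p$—and it is the step I would expect to require the most care.
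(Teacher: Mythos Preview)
Your second branch has a genuine gap that you yourself flag but do not close. Taking a pseudo-intersection $D\subseteq A$ of the tower $\{A\setminus C_\beta:\beta<\omega_1\}$ only guarantees that $D$ \emph{eventually leaves} each $C_\beta$; it does nothing to push $D$ \emph{toward} $p$. Nothing prevents $D$ from sitting inside a single compact set disjoint from all the $C_\beta$, so neither discreteness of $D$ nor $p\in\cl[K]{D}$ follows. Your suggestion to ``use countable tightness via Lemmas~\ref{freeseq} and \ref{sequentially}'' to repair this is not an argument: Lemma~\ref{sequentially} needs a target closed set of character $<\Pint$, and you never identify one. (Your first branch is also vague on this point: what is the ``closed set of character $\le\omega_1$ assembled from these neighborhoods'' to which you apply Lemma~\ref{sequentially}? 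If it is $\cl[K]{E}$, its character at $p$ could be as large as $\chi(p,K)$.)

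The paper's proof avoids the whole difficulty by a different organization. It first exploits the countability of $A$ to write $A=\bigcup_i A_i$ along an exhaustion $K_0\subset K_1\subset\cdots$ of $K\setminus\{p\}$ by regular closed sets; then any selection $\{a(i,f(i)):i<\omega\}$ of \emph{one point per level} is automatically discrete. The free $\omega_1$-sequence is produced inside the compact $G_\delta$ set $Y=K\setminus\bigcup_i K_i$, and from the freeness witnesses one builds closed sets $Y_\alpha\ni x_\alpha$ of character exactly $\omega_1$. Lemma~\ref{sequentially} is then applied \emph{for each} $\alpha$ to get a partial selector $f_\alpha$ whose graph converges to $Y_\alpha$. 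Finally a \emph{countable} poset (so $\covM>\omega_1$, which follows from $\Pint>\omega_1$) amalgamates the $\omega_1$ many $f_\alpha$ into a single $f$; the resulting $D=\{a(i,f(i))\}$ is discrete by construction, and its cluster points $y_\alpha\in Y_\alpha$ form a new free $\omega_1$-sequence, forcing $p\in\cl[K]{D}$. The two ideas you are missing are the level decomposition (which gives discreteness for free and makes $\chi(p,K)$ irrelevant) and the genericity argument over a countable poset (rather than a single $\sigma$-centered pseudo-intersection) to simultaneously meet $\omega_1$ many targets $Y_\alpha$.
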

\begin{proof}
Since $A$ is countable, it is possible to find a collection $\{K_i:i<\omega\}$ of regular closed subsets of $K$ such that $K_i\subsetneq\inte[K]{K_{i+1}}$, $\infty\notin K_i$ for $i<\omega$ and $A\subset\bigcup\{K_i:i<\omega\}$. For each $i<\omega$, let $A_i=\{a(i,n):n<\omega\}$ be an enumeration of $A\cap (K_{i+1}\setminus K_i)$ (where $K_{-1}=\emptyset$). Let $Y=K\setminus\bigcup\{K_i:i<\omega\}$, notice that $Y=K\setminus\bigcup\{\inte[K]{K_i}:i<\omega\}$ so in fact $Y$ is closed and thus compact.

If $\infty$ is isolated in $Y$, let $U$ be an open subset of $K$ with $U\cap Y=\{\infty\}$. Let us assume without loss of generality that $U\cap A_i\neq\emptyset$ for each $i<\omega$. Then pick $p_i\in U\cap A_i$ for each $i<\omega$. For each $i<\omega$, $p_i\in\inte[K]{K_{i+1}}$ and $p_j\in\inte[K]{K_{i+1}}$ if $i<j$. From this, it follows that that $D=\{p_i:i<\omega\}$ is a discrete subset of $A$. Moreover, $\infty\in\cl[K]{D}$ and we are done. 

So assume that $\infty$ is not isolated in $Y$. Now, if there is a countable discrete subset of $Y$ that clusters at $\infty$, we would be done by Lemma \ref{needfreeseq}. So assume that this is not the case, so by Lemma \ref{freeseq}, there is a free $\omega_1$-sequence $\{x_\alpha:\alpha<\omega_1\}\subset Y$ whose only complete accumulation point is $\infty$.

For each $\beta<\omega_1$, consider two disjoint zero sets $Z_\beta$ and $W_\beta$ such that $\{x_\alpha:\alpha<\beta\}\subset Z_\beta$ and $\{x_\alpha:\beta\leq\alpha<\omega_1\}\subset W_\beta$. We may assume that $Z_\beta\cup W_\beta\subset Y$ for all $\beta<\omega_1$ since $Y$ is of type $G_\delta$. For each $\beta<\omega_1$, let $Y_\beta=(\bigcap\{Z_\alpha:\beta<\alpha\})\cap(\bigcap\{W_\alpha:\alpha\leq\beta\})$, notice that $x_\beta\in Y_\beta\subset Y$ and $Y_\beta$ has character $\omega_1$.

Given $\alpha<\omega_1$, $A$ clusters at $x_\alpha$. By Lemma \ref{sequentially}, it is possible to find $E_\alpha\in[\omega]\sp{\omega}$ and $f_\alpha\in{}\sp{E_\alpha}\omega$ such that $\{a(i,f_\alpha(i)):i\in E_\alpha\}$ converges to $Y_\alpha$.

Consider the countable poset $\poset={}\sp{<\omega}\omega$. For each $\alpha<\omega_1$ and $n<\omega$, let $D(\alpha,n)$ be the set of all $p\in\poset$ such that there is $m<\omega$ such that $n\leq m$, $m\in{dom}(p)\cap E_\alpha$ and $p(m)=f_\alpha(m)$. Clearly, $D(\alpha,n)$ is dense in $\poset$ for all $\alpha<\omega_1$. Since $\covM>\omega_1$ there is a generic filter intersecting all these dense sets. Thus, it is possible to define $f\in \baire$ such that $F_\alpha=\{m<\omega:m\in E_\alpha,f(m)=f_\alpha(m)\}$ is infinite.

So let $D=\{a(i,f(i)):i<\omega\}$. It follows that $D$ is a discrete subset of $A$. Given $\alpha<\omega$, let $y_\alpha$ be a cluster point of $\{a(i,f(i)):i\in F_\alpha\}$, notice that $y_\alpha\in Y_\alpha$. Then for each $\beta<\omega_1$, $\{y_\alpha:\alpha<\beta\}\subset Z_\beta$ and $\{y_\alpha:\beta\leq\alpha\}\subset W_\beta$. Thus, $\{y_\alpha:\alpha<\omega_1\}$ is a free $\omega_1$-sequence. Since $K\setminus\{\infty\}$ has countable tightness and $K$ is compact, $\{y_\alpha:\alpha<\omega_1\}$ has $\infty$ as its unique complete accumulation point. Since $y_\alpha\in\cl[K]{D}$ for all $\alpha<\omega_1$, we conclude that $\infty\in\cl[K]{D}$, which is what we wanted to prove.
\end{proof}

The proof of the next result uses Todorcevic's side-condition method for proper forcing. See \cite{todorcevic-notes_forcing}, in particular Chapter 7, for an introduction on this. We will use the usual notation, where $H(\kappa)$ is the set of all sets with transitive closure of cardinality $<\kappa$.

\begin{thm}\label{caseuncountable}
Assume $\PFA$. Let $K$ be a compact space, $\infty\in K$ and $A\in [K]\sp{\omega_1}$ such that $K\setminus\{\infty\}$ has countable tightness, $\chi(\infty,K)=\omega_1$, $A$ is dense in $K$ and no countable subset of $A$ has $\infty$ in its closure. Then there is a discrete set $E\subset A$ such that $\infty\in\cl[K]{E}$.
\end{thm}
\begin{proof}
In \cite{todorcevic-freeseq} and \cite{shapirovskii} (see also \cite[Theorem 3.1]{todorcevic-chaincond_methods}) it is shown that every compact space of countable tightness admits a point-countable $\pi$-base. By considering a maximal pairwise disjoint family of open sets missing $\infty$, we obtain that there is a point-countable $\pi$-base $\B$ of $K$ that moreover consists of open sets with their closures missing $p$. Since $K$ is not separable, $K$ has uncountable $\pi$-weight so $|\B|>\omega$. By refining $\B$, we may also further assume that $\{\cl[K]{B}:B\in\B\}$ is point-countable.

Let $\kappa$ be large enough so that $K,\B\subset H(\kappa)$. Let $\poset$ be the set of all $p=\pair{H_p,\en_p}$ such that the following holds:
\begin{itemize}
\item[(i)] $H_p$ is a finite set of pairs $\pair{a,B}$ where $a\in A\cap B$ and $B\in\B$,
\item[(ii)] if $\pair{a_0,B_0}\neq\pair{a_1,B_1}$ are in $H_p$ then $a_i\notin B_{1-i}$ for $i\in 2$,
\item[(iii)] $\en_p$ is a finite $\in$-chain of countable elementary submodels of $(H(\kappa),\in)$,
\item[(iv)] if $\pair{a_0,B_0}\neq\pair{a_1,B_1}$ are in $H_p$ then there is $N\in\en_p$  and $j\in 2$ such that $a_i\in N$ iff $B_i\in N$ iff $i=j$,
\item[(v)] if $N\in\en_p$ and $\pair{a,B}\in H_p\setminus N$ then for every $a\sp\prime\in A\cap N$ and every $B\sp\prime\in\B$ with $a\sp\prime\in B\sp\prime$ it follows that $a\notin B\sp\prime$.
\end{itemize}
Further, we will say that $q\leq p$ if $H_p\subset H_q$ and $\en_p\subset\en_q$. Then $\pair{\poset,\leq}$ is a poset. Let us assume for the moment that $\pair{\poset,\leq}$ is proper and show how to proceed.

Let $\{U_\alpha:\alpha<\omega_1\}$ be a local base at $\infty$. Fix $\alpha<\omega_1$ for the moment and let 
$$
D_\alpha=\{p\in\poset:\exists\ \pair{a,B}\in H_p\ (a\in U_\alpha)\}.
$$
Let us prove that $D_\alpha$ is dense in $\poset$.

So let $p\in\poset$, we must find an extension of $p$ on $D_\alpha$. Let $H_p=\{\pair{a_i,B_i}:i<k\}$ for some $k<\omega$. Consider $M\prec H(\kappa)$ such that $p\in M$. Clearly, $a_i,B_i\in M$ for all $i<k$ and $\en_p\subset M$. Let $\mathcal{C}=\{B\in\B:\exists\ a\in M\cap A\ (a\in B)\}$. Then since $\B$ is point-countable, $\mathcal{C}$ is countable. Also, consider $V=U_\alpha\setminus(\bigcup\{\cl[K]{B_i}:i<k\})$, which is an open neighborhood of $\infty$, so $V$ is not separable. Thus, there are uncountably many elements of $\B\setminus\mathcal{C}$ contained in $V$, take one of them $B_k$. Finally, choose $a_k\in B_k\cap A$. Define $q$ such that $H_q=H_p\cup\{\pair{a_k,B_k}\}$ and $\en_q=\en_p\cup\{M\}$. Then it is easy to see that $q\in\poset\cap D_\alpha$ and $q\leq p$. 

Thus, by $\PFA$ we can find a generic filter $G\subset\poset$ that intersects $D_\alpha$ for all $\alpha<\omega_1$. Let
$$
E=\{a:\exists\ p\in G, B\in\B\ (\pair{a,B}\in H_p)\}.
$$

Using property (ii) in the definition of $\poset$, it is easy to see that $E$ is a discrete subset of $A$ that intersects every neighborhood of $\infty$ and thus $\infty\in\cl[K]{E}$, which is what we wanted to construct.

Now let us prove that $\pair{\poset,\leq}$ is indeed proper. So let $\theta$ be a large enough cardinal, we may assume that $\kappa<\theta$, and let $N\prec H(\theta)$ be such that $\poset\in N$. Let $M=N\cap H(\kappa)$, so $M\prec H(\kappa)$. So let $p\in\poset\cap N$, we must find a $(N,\poset)$-generic extension of $p$. Since $p\in M$ as well, then $\bar{p}=\pair{H_p,\en_p\cup\{M\}}\in\poset$. We claim that $\bar{p}$ is $(N,\poset)$-generic.

Let $D\in N$ be a dense set of $\poset$ and let $q\in D$ such that $q\leq \bar{p}$. The idea here is to notice that $q=(q\cap M)\cup(q\setminus M)$ and then use elementarity to replace $q\setminus M$ by something inside $N$ that is still compatible with $q$.

There is $k<\omega$ and $T\subset k$ such that $\en_q\setminus M=\{M_i:i\leq k\}$ where $M=M_0\in\ldots\in M_{k}$, $H_q\setminus M=\{\pair{a_i,B_i}:i\in T\}$ where $\pair{a_i,B_i}\subset M_{i+1}\setminus M_i$ for all $i\in T$. We are not going to bother to give names to $q\cap M$, but notice that $q\cap M=\pair{H_q\cap M,\en_q\cap M}\in M$. 

So let $S$ be the set of all $r\in\poset$ that can replace $q\setminus M$. More specifically, 
\begin{itemize}
\item[(a)] $H_r=\{\pair{x_i\sp{r},C_i\sp{r}}:i\in T\}$,
\item[(b)] $\en_r=\{Z_i\sp{r}:i\leq k\}$,
\item[(c)] $\{x_i\sp{r},C_i\sp{r}\}\subset Z_{i+1}\sp{r}\setminus Z_i\sp{r}$ for $i\in T$,
\item[(d)] $Z_i\sp{r}\in Z_{i+1}\sp{r}$ for $i\leq k$,
\item[(e)] $q\cap M\subset Z_0\sp{r}$, and
\item[(f)] $\bar{r}=\pair{H_{q\cap M}\cup H_{r},\en_{q\cap M}\cup\en_{r}}\in D$.
\end{itemize}
Notice that $S\in N$ and $q\setminus M\in S$ so by elementarity $S\cap N\neq\emptyset$.  However, we would like to choose $r\in S\cap N$ in such a way that $\bar{r}$ is compatible with $q$ and this requires more work.

We will need the following claim. Its proof is not hard and follows roughly the same scheme as the proof above that given $\alpha<\omega_1$, the set $D_\alpha$ is dense. So we will leave the proof to the reader.

\vskip6pt
\noindent{\it Claim:} Given any $\B\sp\prime\in[\B]\sp\omega$, there is $r\in S$ such that $C_i\sp{r}\in\B\setminus\B\sp\prime$ for each $i\in T$.
\vskip6pt

Next, for each $r\in S$, let us define $W_r={}\sp{T}[\bigcup\{\cl[K]{C_i\sp{r}}:i\in T\}]$, this is a closed subset of the finite product ${}\sp{T}K$.

Let $x=\pair{x_i:i\in T}\in{}\sp{T}K$ be a $T$-tuple. For each $i\in T$, by the choice of $\B$, there is a set $\B_i\in[\B]\sp{\leq\omega}$ such that $B\in\B_i$ if and only if $x_i\in \cl[K]{B}$ . Then $\B\sp\prime=\bigcup\{\B_i:i\in T\}$ is a countable set. Now let $r\in S$ be given by the Claim above for this choice of $\B\sp\prime$. Notice that this implies that $x\notin W_r$ for this specific choice of $r$. Notice that this implies that $\bigcap\{W_r:r\in S\}=\emptyset$.

By compactness we have in fact that there is a finite set $F\subset S$ such that $\bigcap\{W_r:r\in F\}=\emptyset$. But $W_r\in M$ for every $r\in M$ so by elementarity such an $F$ can be chosen in $M$. So now consider $a\in{}\sp{T}K$ given by $a(i)=a_i$ for each $i\in T$. Since $a\notin\bigcap\{W_r:r\in F\}$, there must exist $r_0\in F\cap M$ such that $a\notin W_{r_0}$.

Thus, we have obtained an element $r_0\in\poset\cap M$ such that $\bar{r_0}\in D$ (the definition of $\bar{r_0}$ is given in item (f)). Using the fact that $a\notin W_{r_0}$, it is easy to see that 
$$
\pair{H_{q\cap M}\cup H_{r_0}\cup (H_q\setminus M),\en_{q\cap M}\cup\en_{r_0}\cup(\en_q\setminus N)}
$$
is an element of $\poset$ that extends both $q$ and $\bar{r_0}$. This concludes the proof that $\poset$ is proper and the proof of this result.
\end{proof}

\begin{proof}[Proof of Theorem \ref{main}]
Let $X$ be such a locally compact space. Clearly, we may assume that $X$ is not compact and call $\infty$ the point at infinity of the one-point compactification $Y=X\cup\{\infty\}$. Given $A\subset X$ such that $\infty\in\cl[Y]{A}$, we must prove that there is a discrete set $D\subset A$ such that $\infty\in\cl[Y]{D}$. All our arguments will be inside $\cl[Y]{A}$ so we will assume, without loss of generality, that $A$ is dense in $X$.

First, let us consider the case when there is a countable subset of $A$ that clusters at $\infty$. Again, by passing to a subspace, we may assume that $A$ is itself countable. Then by Proposition \ref{casecountable} we obtain that $A$ contains a discrete subset that clusters at $\infty$, so we are done.

So assume that every countable subset of $A$ has its closure disjoint from $\infty$. By Lemmas \ref{needfreeseq} and \ref{freeseq} we may assume that there is a free $\omega_1$-sequence $\{x_\alpha:\alpha<\omega_1\}\subset X$ whose only complete accumulation point is $\infty$.

Now, let us recursively define an increasing sequence $\{A_\alpha:\alpha<\omega_1\}$ of countable subsets of $X$ such that $\infty\in\cl[Y]{\bigcup\{A_\alpha:\alpha<\omega_1\}}$ in the following way. In step $\beta<\omega_1$, the countable set $A_{<\beta}=\bigcup\{A_\alpha:\alpha<\beta\}$ has its closure disjoint from $p$ so there is a first ordinal $\lambda(\beta)<\omega_1$ such that $x_{\lambda(\beta)}\notin\cl{A_{<\beta}}$. By countable tightness, there is $S_\beta\in[A]\sp{\leq\omega}$ such that $x_{\lambda(\beta)}\in\cl{S_\beta}$. Let $A_\beta=A_{<\beta}\cup S_\beta$.

Again, we may assume that $A=\bigcup\{A_\alpha:\alpha<\omega_1\}$ without loss of generality. For $\alpha<\omega_1$, let $U_\alpha=Y\setminus\cl{A_\alpha}$. Notice that by countable tightness $\bigcap\{U_\alpha:\alpha<\omega_1\}=\{\infty\}$. By compactness it follows that $\{U_\alpha:\alpha<\omega_1\}$ is a local base at $\infty$. So we may apply Theorem \ref{caseuncountable} and we obtain a discrete $D\subset A$ such that $\infty\in\cl[Y]{D}$. This finishes the proof.
\end{proof}

\section{Character of far and remote points}\label{sectionchar}

Here we continue the study of the character of remote closed sets in separable metrizable spaces that the second-named author started in \cite{hg-remotediscretelygen}. For a Tychonoff non-compact space $X$, we define $\remcard(X)$ to be the minimal $\kappa$ such that there exists set $F$ closed in $\beta X$ remote from $X$ and with $\chi(F)=\kappa$; if no remote sets exist, $\remcard(X)=\infty$. Similarly, define $\farcard(X)$ to be the minimal $\kappa$ such that there exists set $F$ closed in $\beta X$ far from $X$ and with $\chi(F)=\kappa$; if no far sets exist, $\farcard(X)=\infty$. 

In \cite[Proposition 3.1]{hg-remotediscretelygen}, the first part of the following was proved, the second can be proved in a similar way.

\begin{lemma}\label{monotone}
If $Y$ is dense in $X$, then $\remcard(Y)\leq\remcard(X)$ and $\farcard(Y)\leq\remcard(X)$.
\end{lemma}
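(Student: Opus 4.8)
The plan is to obtain both inequalities from a single construction built on the canonical map between the two \v Cech--Stone compactifications. Since $Y$ is dense in $X$ it is dense in $\beta X$, so the inclusion $Y\hookrightarrow\beta X$ extends to a continuous surjection $\pi\colon\beta Y\to\beta X$ with $\pi|_Y=\mathrm{id}_Y$. Being a continuous map between compact Hausdorff spaces, $\pi$ is closed, and for every $Z\subseteq Y$ one has $\pi(\cl[\beta Y]{Z})=\cl[\beta X]{Z}$ (continuity gives ``$\subseteq$'', while the left-hand side is compact, hence closed, and contains $\pi(Z)=Z$, giving ``$\supseteq$''). The central idea is that a \emph{single} remote set of $X$ of least character pulls back to a subset of $Y$ that is simultaneously remote and far. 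Concretely, I would fix a set $F$ closed in $\beta X$, remote from $X$, with $\chi(F)=\remcard(X)$ (if no remote set exists both inequalities hold trivially), and set $F'=\pi^{-1}(F)$, which is closed in $\beta Y$.

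Next I would verify $\chi(F')\le\chi(F)$, which is exactly where closedness of $\pi$ enters. Given an open neighbourhood base $\{V_\alpha:\alpha<\chi(F)\}$ of $F$ in $\beta X$, each $\pi^{-1}(V_\alpha)$ is an open set containing $F'$, and I claim these form a base at $F'$. Indeed, if $W\supseteq F'$ is open in $\beta Y$, then $\pi(\beta Y\setminus W)$ is closed and disjoint from $F$ (if $y\in F$ equalled $\pi(w)$ with $w\notin W$, then $w\in\pi^{-1}(F)=F'\subseteq W$, a contradiction), so some $V_\alpha$ misses $\pi(\beta Y\setminus W)$, whence $\pi^{-1}(V_\alpha)\subseteq W$. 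Thus $\chi(F')\le\chi(F)=\remcard(X)$.

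It then remains to transfer the two smallness notions from $Y$ to $X$. For the remote part I would use that a nowhere dense $N\subseteq Y$ is nowhere dense in $X$: if $\cl{N}$ had a nonempty interior $U$ in $X$, then $U\cap Y$ would be a nonempty open subset of $Y$ contained in $\cl[Y]{N}$, contradicting nowhere density in $Y$. Together with $\pi(\cl[\beta Y]{N})=\cl[\beta X]{N}$ and remoteness of $F$, this yields $F'\cap\cl[\beta Y]{N}=\emptyset$, so $F'$ is remote in $Y$ and $\remcard(Y)\le\remcard(X)$. The far part proceeds identically once I establish the analogous and, I expect, main point: \emph{every closed discrete $M\subseteq Y$ is nowhere dense in $X$}. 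The same interior computation reduces this to the observation that a point interior to $\cl{M}$ in $X$ would force a nonempty open subset of $Y$ to be discrete, i.e.\ to consist of isolated points of $Y$, which cannot occur in the crowded setting of interest. The delicate feature here---and the reason the bound is $\remcard(X)$ rather than $\farcard(X)$---is that a set closed and discrete in $Y$ need not even be closed in $X$, since its $X$-limit points land in $X\setminus Y$; hence one cannot invoke a merely \emph{far} set of $X$, and it is precisely the implication ``closed discrete in $Y$ $\Rightarrow$ nowhere dense in $X$'' that allows a \emph{remote} set of $X$ to absorb the far sets of $Y$. Granting this step, remoteness of $F$ gives $F\cap\cl[\beta X]{M}=\emptyset$, so $F'\cap\cl[\beta Y]{M}=\emptyset$; thus $F'$ is far from $Y$ and $\farcard(Y)\le\chi(F')\le\remcard(X)$, completing the proof.
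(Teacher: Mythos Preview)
The paper does not actually prove this lemma: it cites \cite[Proposition 3.1]{hg-remotediscretelygen} for the first inequality and asserts that the second ``can be proved in a similar way.'' Your argument via the canonical extension $\pi\colon\beta Y\to\beta X$, the pullback $F'=\pi^{-1}(F)$, the character bound from closedness of $\pi$, and the identity $\pi(\cl[\beta Y]{Z})=\cl[\beta X]{Z}$ is exactly the standard route and is correct. In particular your verification that a nowhere dense subset of $Y$ is nowhere dense in $X$ is clean and needs no extra hypotheses, so $\remcard(Y)\le\remcard(X)$ holds as stated.

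Your own caveat about the second inequality is not just a stylistic hedge---it is a genuine hypothesis that the paper silently relies on. The implication ``$M$ closed discrete in $Y$ $\Rightarrow$ $M$ nowhere dense in $X$'' is exactly equivalent to $Y$ being crowded (equivalently $X$ crowded, since $Y$ is dense and all spaces are Tychonoff). Without it the lemma is false: if $x_0\in X$ is isolated then $\{x_0\}$ is a remote closed set of minimal character while $\{x_0\}$ is closed discrete, so $\farcard(X)$ need not be bounded by $\remcard(X)$ even when $Y=X$. Since every application of the lemma in the paper (e.g.\ Proposition~\ref{farseparable}, Theorem~\ref{farcardinals}) is to crowded spaces, your restriction to ``the crowded setting of interest'' is precisely the right scope, and with that standing assumption your proof is complete.
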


We define $\remcard=\remcard(\Q)$. The next result gives the new bound we announced in the introduction.

\begin{propo}\label{farseparable}
Let $X$ be a crowded, separable, Tychonoff space and assume that $F\subset\beta X$ is far from $X$. Then the character of $F$ is at least $\d$.
\end{propo}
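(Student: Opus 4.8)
The plan is to argue by contradiction. Suppose $\chi(F)=\kappa<\d$ and fix a neighbourhood base $\{V_\alpha:\alpha<\kappa\}$ of $F$ in $\beta X$; shrinking it if necessary, I may take it to be a decreasing filter base with each $\cl[\beta X]{V_\alpha}$ again a member, so that $\bigcap_{\alpha<\kappa}\cl[\beta X]{V_\alpha}=F$. Writing $W_\alpha=V_\alpha\cap X$, the goal is to manufacture a single infinite \emph{closed discrete} set $N\subset X$ that meets every $V_\alpha$. Indeed, if $N\cap V_\alpha\neq\emptyset$ for all $\alpha$, then the sets $\cl[\beta X]{N}\cap\cl[\beta X]{V_\alpha}$ form a decreasing family of nonempty compacta, so their intersection $\cl[\beta X]{N}\cap F$ is nonempty; as $F\subset X^{\ast}$ and $N\subset X$, this produces a cluster point of the closed discrete set $N$ lying in $F$, contradicting the farness of $F$. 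The difficulty is that there are $\kappa$ (uncountably) many constraints $W_\alpha$ while $N$ must be countable and closed discrete: it is precisely the closed discreteness that forbids the trivial choice of one point of a dense set inside each $W_\alpha$.

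The device that beats $\kappa$ many constraints with a countable object is domination, and this is where $\kappa<\d$ is used. First I would produce a countable \emph{locally finite} family $\{U_n:n<\omega\}$ of nonempty open subsets of $X$, together with, for each $n$, a set $\{x_{n,m}:m<\omega\}$ dense in $U_n$, so that each $V_\alpha$ meets $U_n$ for infinitely many $n$. Granting this, set $S_\alpha=\{n:U_n\cap V_\alpha\neq\emptyset\}$ (infinite) and let $g_\alpha(n)=\min\{m:x_{n,m}\in V_\alpha\}$ for $n\in S_\alpha$. Encoding these thresholds as elements of $\baire$ and using that a family of size $\kappa<\d$ is not dominating, a routine argument yields $f\in\baire$ with $\{n\in S_\alpha:f(n)\ge g_\alpha(n)\}$ infinite for every $\alpha$. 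Now put $N=\bigcup_{n<\omega}\{x_{n,m}:m\le f(n)\}$. Each block is finite and $\{U_n\}$ is locally finite, so $N$ is closed discrete in $X$; and for every $\alpha$ there are infinitely many $n$ carrying a point $x_{n,g_\alpha(n)}\in N\cap V_\alpha$, so $N$ meets every $V_\alpha$. This is the desired contradiction.

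The heart of the matter — and the step I expect to be hardest — is the construction of the locally finite scaffold $\{U_n\}$ whose members enter every neighbourhood of $F$ infinitely often. Because $F$ is far, no single selection $z_n\in U_n$ may converge into $F$: such a sequence would be closed discrete with a cluster point in $F$. Thus the family must approach $F$ only \emph{transiently}, dipping into each $V_\alpha$ infinitely often while no thin selection ever detects $F$; the domination step above is exactly what thickens a transient family into a closed discrete set reaching $F$. When $X$ is locally compact one can take the $U_n$ with compact closures — as for $\omega\times\cantor$, where the $U_n$ are the clopen columns — and the scaffold is immediate, since every neighbourhood of a remainder point meets infinitely many columns. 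For a general crowded separable $X$, say a $\mathbb{Q}$-like space with no nontrivial compact subsets, the scaffold must instead be built by recursion: using separability to realise $F$ inside $\cl[\beta X]{D}$ for a fixed countable dense $D$, and using $F\subset X^{\ast}$ together with crowdedness to peel off, around $F$, a locally finite family of open sets each carrying a dense countable subset. Verifying that finite selections from this family are genuinely closed discrete in $X$, and that every $V_\alpha$ is met infinitely often, is the technical core.

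It is cleaner to run the same argument after transporting everything to $\beta\omega$. Enumerating $D=\{d_n:n<\omega\}$ gives a continuous surjection $\phi:\beta\omega\to\beta X$ with $\phi(n)=d_n$, and since $\phi$ is closed the saturated neighbourhoods of $\hat F=\phi^{-1}(F)$ are cofinal among all its neighbourhoods, whence $\chi(\hat F,\beta\omega)\le\chi(F)$. Here $\hat F\subset\omega^{\ast}$ is disjoint from $A^{\ast}$ for every $A$ in the ideal $\mathcal I=\{A\subset\omega:\{d_n:n\in A\}\text{ is closed discrete in }X\}$, and also from $\phi^{-1}(X)$; so the claim reduces to showing that a filter extending the dual filter $\mathcal I^{\ast}$ and concentrating off $\phi^{-1}(X)$ has character at least $\d$. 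For the prototype $\mathcal I$ is the ``finite on each column'' ideal, whose dual filter has character exactly $\d$, and the scaffold is a concrete witness to this. Finally I would dispose of the degenerate case in which $X$ has no infinite closed discrete subset, so that every point of $X^{\ast}$ is trivially far; this pseudocompact situation falls outside the mechanism above and requires a separate short argument.
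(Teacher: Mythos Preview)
Your overall strategy---build a locally finite scaffold of open sets, attach to each $V_\alpha$ a partial threshold function, and use $\kappa<\d$ to find a single $f$ not dominated by any of them, then take the finite block $\{x_{n,m}:m\le f(n)\}$ in each piece---is exactly the paper's argument. The domination step you describe is precisely the one invoked there (via \cite[Theorem~3.6]{vd62}).

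Where you diverge is in the scaffold construction, which you flag as ``the technical core'' and attack by several routes (recursion in a general crowded $X$, transport to $\beta\omega$ via a dense enumeration, a separate pseudocompact case). The paper bypasses all of this in one line: by Lemma~\ref{monotone} one may replace $X$ by its countable dense subset $Q$, so that $X$ is now countable and $0$-dimensional. At that point the scaffold is immediate: since $F\subset X^\ast$ is closed and $X$ is countable, cover each point of $X$ by a clopen set of $\beta X$ missing $F$ and disjointify to get pairwise disjoint clopens $\{W_n:n<\omega\}$ of $\beta X$ with $X\subset\bigcup_n W_n$ and $W_n\cap F=\emptyset$. Every neighbourhood $U_\alpha$ of $F$ then meets infinitely many $W_n$ automatically, and the $W_n\cap X$ form the desired locally finite family. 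Your worry about the pseudocompact case also evaporates, since a countable crowded space is never pseudocompact.

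So your proof is correct, but the reduction to a countable space up front turns what you treat as the hard step into a triviality; the paper's version is a short paragraph.
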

\begin{proof}
Let $Q\subset X$ be countable and dense. By Lemma \ref{monotone}, we may assume that $Q=X$ so that $X$ is $0$-dimensional. Consider some closed set $F\subset\beta X$ with character $\kappa<\d$, we will prove that $F$ is not far from $X$. We let $\{U_\alpha:\alpha<\kappa\}$ be a base for $F$. It is easy to construct a pairwise-disjoint family $\{W_n:n<\omega\}$ of clopen sets of $\beta X$ such that $X\subset\bigcup\{W_n:n<\omega\}$. For each $n<\omega$, let $\{x(n,i):i<\omega\}$ be an enumeration of $W_n\cap X$.

For each $\alpha<\kappa$, there is a set $E_\alpha\in[\omega]\sp\omega$ and a function $f_\alpha:E_\alpha\to\omega$ such that $x(n,f_\alpha(n))\in U_\alpha$ for each $n\in E_\alpha$. By Theorem 3.6 in \cite{vd62}, there is $f:\omega\to\omega$ such that for all $\alpha<\kappa$ and $m<\omega$ there is $n\in E_\alpha\setminus m$ such that $f_\alpha(n)<f(n)$.

Define $D=\{x(n,i):n<\omega,i\leq f(n)\}$. Then it follows that $D$ is a closed discrete subset of $X$ such that $D\cap U_\alpha\neq\emptyset$ for each $\alpha<\kappa$. Then $\cl[\beta X]{D}\cap F\neq\emptyset$ so $F$ is not far.
\end{proof}

Summarizing what we have about these cardinals so far, we obtain the following.

\begin{thm}\label{farcardinals}
Given any separable, non-compact, metrizable space $X$,
$$
\d\leq\farcard(X)\leq\remcard(X)\leq\cofM
$$
\end{thm}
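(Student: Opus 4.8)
The plan is to chain together the bounds that have already been established, filling in only the inequalities that are not immediate. The statement asserts a four-term chain, so I would verify each of the three inequalities separately, in order from left to right.

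\medskip

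\noindent\textbf{Step 1: the bound $\d\leq\farcard(X)$.}
This is exactly the content of Proposition \ref{farseparable}. A separable, non-compact metrizable space is in particular crowded once we pass to a dense-in-itself subspace; strictly, I would first observe that a non-compact \emph{metrizable} space need not be crowded, so I must handle possible isolated points. However, the relevant point is that $\farcard$ is a property detected by far sets in $\beta X$, and isolated points of $X$ contribute closed discrete subsets. The cleanest route is to apply Lemma \ref{monotone}: a separable non-compact metrizable space contains a countable dense subspace $Y$, and if $Y$ has isolated points one passes to the crowded part or argues that any far set must already avoid the closures of the (closed discrete) isolated points. Once we are in the crowded separable setting, Proposition \ref{farseparable} gives directly that any far closed set $F$ has character at least $\d$, hence $\farcard(X)\geq\d$.

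\medskip

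\noindent\textbf{Step 2: the bound $\farcard(X)\leq\remcard(X)$.}
This is essentially definitional together with the observation that every remote set is far. Indeed, a nowhere dense set is a strictly weaker constraint than a closed discrete set in a crowded space: in a crowded space every closed discrete subset is nowhere dense (it has empty interior since nonempty open sets are infinite), so avoiding the closures of \emph{all} nowhere dense sets is a stronger requirement than avoiding the closures of all closed discrete sets. Hence a remote set is automatically far, so the witnessing set for $\remcard(X)$ is also a far set of the same character, giving $\farcard(X)\leq\remcard(X)$.

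\medskip

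\noindent\textbf{Step 3: the bound $\remcard(X)\leq\cofM$.}
This is the one genuinely substantive inequality and I expect it to be the main obstacle, since it requires \emph{producing} a remote closed set of small character rather than bounding an arbitrary one. The expected source is the prior work \cite{hg-remotediscretelygen}, which established the upper bound $\remcard\leq\cofM$ for $\Q$; combined with Lemma \ref{monotone} (since any separable metrizable $X$ has a dense copy of, or densely embeds, a countable crowded space, controlling $\remcard(X)$ against $\remcard=\remcard(\Q)$) this transfers the bound to arbitrary separable non-compact metrizable $X$. The construction underlying $\remcard\leq\cofM$ typically builds a remote point (or small remote set) by a transfinite recursion of length $\cofM$, using a cofinal family in the meager ideal to diagonalize against all nowhere dense sets: one enumerates a cofinal collection of meager (equivalently nowhere dense) sets $\{N_\alpha:\alpha<\cofM\}$ and recursively chooses a descending family of clopen sets, each avoiding the next $N_\alpha$, so that the resulting filter generates a closed remote set whose character is bounded by $\cofM$. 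The delicate part here is ensuring that the descending clopen family both avoids all nowhere dense sets \emph{and} has character exactly (at most) $\cofM$, which is where the defining property of $\cofM$ as the cofinality of the meager ideal is used.

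\medskip

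\noindent\textbf{Main obstacle.} The first two inequalities are formal once the definitions and Proposition \ref{farseparable} are in hand. The real work is Step 3, and the plan is to cite or reconstruct the recursion from \cite{hg-remotediscretelygen} that manufactures a remote closed set of character $\cofM$, then invoke monotonicity (Lemma \ref{monotone}) to pass from $\Q$ to the general separable metrizable $X$. If a self-contained argument is preferred, I would make the recursion explicit, taking care that the clopen filter base produced has size and character no larger than the cofinality of the meager ideal.
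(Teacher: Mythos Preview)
Your three-step decomposition is exactly the paper's: Proposition~\ref{farseparable} for the lower bound, the trivial implication remote $\Rightarrow$ far for the middle inequality, and van Douwen's construction (as written out in \cite[Theorem~3.4]{hg-remotediscretelygen}) for the upper bound.

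One genuine slip in Step~3: Lemma~\ref{monotone} points the wrong way for the transfer you propose. It asserts that if $Y$ is dense in $X$ then $\remcard(Y)\leq\remcard(X)$; so from $\remcard(\Q)\leq\cofM$ and $\Q$ dense in $X$ you only get $\remcard(\Q)\leq\remcard(X)$, which gives no upper bound on $\remcard(X)$. The inequality $\remcard(X)\leq\cofM$ is instead obtained by carrying out van Douwen's recursion directly in the given separable metrizable $X$ --- the construction is not specific to $\Q$. Your fallback plan of making the recursion explicit in $X$ is therefore the correct route; drop the monotonicity shortcut.
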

\begin{proof}
$\farcard(X)\leq\remcard(X)$ follows from the fact that every remote set is far, $\remcard(X)\leq\cofM$ follows from van Douwen's original argument \cite{vd51} (a detailed proof was given in \cite[Theorem 3.4]{hg-remotediscretelygen}) and $\d\leq\farcard(X)$ follows immediately from Proposition \ref{farseparable}.
\end{proof}

We next prove that it is consistent that $\remcard(\omega\times\cantor)$ is not equal to $\d$. The following lemma is based on the classical proof that after adding a random real, ground model reals are meager (see, for example, Theorem 3.20 in \cite{kunen-handbook}).

\begin{lemma}\label{lemmarandom}
After adding one random real there is a closed nowhere dense subset $N\subset\omega\times\cantor$ such that if $F\subset(\omega\times\cantor)\sp\ast$ is a ground model closed subset, then $\cl[\beta(\omega\times\cantor)]{N}\cap F\neq\emptyset$.
\end{lemma}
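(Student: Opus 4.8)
The plan is to adapt the classical ``random reals make ground-model reals meager'' argument to the fiber space $\omega\times\cantor$. Working in the ground model, I would fix a random real, which I identify with an element $r\in\cantor$ whose $\cantor$-coordinate we will use to drive the construction. First I would note that it suffices to build, for each fiber $\{n\}\times\cantor$, a nonempty clopen ``trap'' determined by initial segments of $r$, and take $N=\bigcup_{n<\omega}(\{n\}\times C_n)$ where each $C_n\subset\cantor$ is clopen. To guarantee $N$ is nowhere dense in $\omega\times\cantor$ it is enough that within each fiber $C_n$ is clopen but not all of $\cantor$ and in fact I would shrink the $C_n$'s so that $\bigcup_n C_n$ is contained in a single meager set; concretely, choosing $C_n=[s_n]$ for suitable strings $s_n\in\cantortree$ with $|s_n|\to\infty$ forces the union to be nowhere dense in each fiber, and since distinct fibers are clopen-separated, $N$ is nowhere dense in the product.

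The heart of the matter is the clustering requirement. Let $F\subset(\omega\times\cantor)\sp\ast$ be a ground-model closed set; I must arrange that $r$ codes a choice of the $C_n$ making $\cl[\beta(\omega\times\cantor)]{N}\cap F\neq\emptyset$. The standard duality is that a closed $F$ in the remainder corresponds (in the ground model) to a filter of clopen sets, or equivalently to the family of ground-model ``traces'' meeting every member of $F$; what I need is that $N$ meets this family in the appropriate ultrafilter-limit sense so that some point of $F$ lies in $\cl N$. I would phrase this as: $\cl[\beta(\omega\times\cantor)]{N}\cap F\neq\emptyset$ holds provided that for every ground-model open $U\supseteq F$ (equivalently every clopen cozero neighborhood trace), $N\cap U$ is infinite / nonempty in the relevant sense. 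So the forcing task reduces to showing that the random real $r$ ``diagonalizes'' all ground-model closed subsets of the remainder simultaneously.

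The mechanism I expect to use is exactly the measure-theoretic genericity of the random real. In the ground model, for each basic clopen constraint $\sigma$ (a finite condition specifying that $N$ should meet a prescribed ground-model open set), the set of $r\in\cantor$ for which the $r$-driven construction obeys $\sigma$ is arrange to have measure one, or at least positive measure accumulating correctly; then by the Borel-coding of $F$ and the fact that $r$ avoids every ground-model measure-zero set, $r$ meets all the requirements. The key computation mirrors Theorem 3.20 in \cite{kunen-handbook}: one checks that the ``bad'' set of reals (those whose induced $N$ misses a fixed $F$) is null, because the random real, read off fiber-by-fiber, hits each ground-model clopen slice with the right asymptotic frequency. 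Since there are only $\mathfrak c$-many, hence in the extension still a manageable family of, ground-model closed sets $F$ but we only need one fixed $F$ at a time (the statement quantifies $F$ after the real is added, but $F$ ground-model means it is coded in $V$), genericity over a single null set suffices for each $F$.

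The main obstacle, and where I would spend the most care, is the precise translation between ``$r$ avoids the relevant null set'' and ``$\cl N\cap F\neq\emptyset$.'' The subtlety is that clustering in $\beta(\omega\times\cantor)$ against a closed set in the remainder is not the same as meeting finitely many clopen sets; I must control $N$ against the full neighborhood filter of $F$, which in the extension may contain sets not present in $V$. The resolution is that $F$ is a \emph{ground-model} closed set, so its neighborhood structure is coded in $V$ by countably-generated data (using that cozero sets in $\beta$ correspond to ground-model subsets of $\omega\times\cantor$), and the random real only needs to defeat the ground-model null ideal. Verifying that the requisite ``bad set is null'' computation goes through uniformly for all the countably many generating neighborhoods of $F$ is the technical crux, and it is here that the explicit fiberwise structure of $\omega\times\cantor$ (each fiber a copy of $\cantor$ carrying the Lebesgue/coin-flipping measure that the random real is generic for) does the essential work.
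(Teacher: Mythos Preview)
Your construction of $N$ is wrong at the outset. You set $N=\bigcup_{n}\{n\}\times C_n$ with $C_n=[s_n]$ a basic cylinder in $\cantor$. But each $\{n\}\times[s_n]$ is \emph{open} in $\omega\times\cantor$, so $N$ is open and certainly not nowhere dense; ``clopen but not all of $\cantor$'' does not mean nowhere dense, and letting $|s_n|\to\infty$ shrinks diameters without killing interiors. The paper builds $N$ very differently: in the ground model one fixes dense open $V_k\subset\cantor$ with $\mu(V_k)<2^{-(k+1)}$, sets $N_k=\cantor\setminus V_k$ (closed nowhere dense of large measure), and after adding the random real $r$ takes $N=\bigcup_k\{k\}\times(N_k+r)$. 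The translation by $r$ is what couples the random real to the nowhere dense set; nothing of this shape appears in your plan.

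Even granting a correct $N$, your proposal stops exactly where the work begins. You do locate the right reduction --- since a ground-model closed $F\subset(\omega\times\cantor)^\ast$ is determined by a ground-model filter of clopen sets, it suffices that $N\cap U\neq\emptyset$ for every ground-model non-compact clopen $U\subset\omega\times\cantor$ --- but ``the bad set of reals is null'' needs an actual computation, not an appeal to asymptotic frequency. In the paper one picks an accumulation point $x$ of the projection $\pi[U]$ and a sequence $x_j\to x$ with $\pair{j,x_j}\in U$; the crux is that for each $n$ there is an infinite $E_n$ with
\[
\mu\Bigl(\bigcup_{j\in E_n}(V_n+x_j)\Bigr)\leq 2^{-n},
\]
proved by using $x_j\to x$ to make $V_n+x+x_j$ coincide with $V_n$ except on a set of measure at most $2^{-(k+1)}\mu(V_n)$. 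Intersecting over $n$ yields a ground-model null set that $r$ avoids, and this forces some $\pair{m,x_m}\in N\cap U$. Your sketch never isolates this convergence-plus-translation mechanism, which is the genuine idea behind the lemma.
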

\begin{proof}
Let $\model$ be a model of ZFC and assume there exists a random real $\dot{r}$ over $\model$. Denote Lebesgue measure on $\cantor$ by $\mu$. Recall that $\cantor$ is a Boolean group with coordinate-wise addition. 

It is a well-known fact from measure theory that there exists a meager, dense and codense subset of $\cantor$ of full measure (for example, see Theorem 1.6 in \cite{oxtoby}). From this, it is possible to find a collection $\{V_k:k<\omega\}$ of dense open subsets of $\cantor$ in $\model$ such that $V_j\subset V_i$ if $i<j$ and $\mu(V_i)<\frac{1}{2\sp{i+1}}$ for all $i<\omega$. For each $k<\omega$, it is possible to find a partition $V_k=\bigcup{\{[s(k,i)]:i<\omega\}}$ where $s(k,i)\in\cantortree$ for all $i<\omega$. Notice that since this is a partition, the following equality holds
$$
(\ast)\ \mu(V_k)=\sum_{i<\omega}{\mu([s(k,i)])},
$$
for all $k<\omega$. Also, for every $k<\omega$, let $N_k=\cantor\setminus V_k$.

Since translations do not change being nowhere dense, the set 
$$
N=\bigcup\{\{k\}\times(N_k+\dot{r}):k<\omega\}
$$
is a closed and nowhere dense in $\omega\times\cantor$. We will prove that $N$ is the set we are looking for. It is enough to prove that given non-compact clopen subset $U\in\model$ of $\omega\times\cantor$, $U\cap N\neq\emptyset$.

Let $x\in\model$ be an accumulation point of the set $\pi[U]$, where $\pi:\omega\times\cantor\to\cantor$ is the projection. From this it is possible to find an infinite set $E\subset\omega$ in $\model$ and for each $i\in E$, $x_i\in\cantor\cap\model$ such that $x=\lim_{i\to\infty}{x_i}$ and $\pair{i,x_i}\in U$ for all $i\in E$. We will prove the following.\vskip10pt

\noindent{\it Claim. }Given $n<\omega$, there is an infinite set $E_n\subset E$ in $\model$ such that
$$
\mu(\bigcup\{V_n+x+x_j:j\in E_n\})\leq\frac{1}{2\sp{n}}.
$$

Let us show first that we are done if we assume the Claim. Given $n<\omega$, let $Z_n=\bigcup\{V_n+x_j:j\in E_n\}$. Since translation by $x$ is a autohomeomorphism of $\cantor$ that preserves measure, the Claim implies that $\mu(Z_n)<\frac{1}{2\sp{n}}$. Then $Z=\bigcap\{Z_n:n<\omega\}$ is a ground-model set of measure zero so $\dot{r}\notin Z$. Then there is $n<\omega$ such that for all $j\in E_n$, $\dot{r}\notin V_n+x_j$. This implies that $x_j\notin V_n+\dot{r}$, or $x_j\in N_n+\dot{r}$. So let $m>n$ such that $m\in E_n$. Then $\pair{m,x_m}\in U\cap N$, which is what we wanted to prove.

So it only remains to prove the Claim. Fix $n<\omega$, we will choose $E_n=\{m_k:k<\omega\}$ recursively. Assume that we are on step $k$ and an increasing sequence $\{m_i:i<k\}$ has been chosen. First, let $j_k<\omega$ be such that
$$
\sum_{j_k\leq i<\omega}\mu([s(n,i)])<\frac{1}{2\sp{k+1}}\mu(V_n),
$$
this is possible by property $(\ast)$ above. Let $\ell_k=\max\{{dom}(s(n,i)):i<j_k\}$. By convergence in the product topology, there is $m_k\in E$ greater than all $\{m_i:i<k\}$ such that $x(i)=x_{m_k}(i)$ for all $i<\ell_k$. Then, $(x+x_{m_k})(i)=0$ for all $i<\ell_k$. From this it follows that if $i<j_k$ then $[s(n,i)]+x+x_{m_k}=[s(n,j)]$. Thus,
$$
(V_n+x+x_{m_k})\setminus V_n\subset\bigcup\{[s(n,i)]:j_k\leq i<\omega\},
$$
which implies that
$$
\mu((V_n+x+x_{m_k})\setminus V_n) <\frac{1}{2\sp{k+1}}\mu(V_n).
$$
This completes the construction. Notice that then we obtain the following
$$
\mu(\bigcup\{(V_n+x+x_{m_k})\setminus V_n:k<\omega\}\leq\sum_{k=0}\sp\infty\frac{1}{2\sp{k+1}}\mu(V_n)=\mu(V_n)
$$
so
$$
\mu(\bigcup\{V_n+x+x_{m_k}:k<\omega\}\leq 2\mu(V_n)=\frac{1}{2\sp{n}},
$$
which implies the statement of the Claim. Thus, this finishes the proof.

\end{proof}

\begin{thm}\label{notcofmeager}
In the model obtained by adding $\omega_2$ random reals to a model of CH we have that $\omega_1=\d<\remcard(\omega\times\cantor)=\cofM=\omega_2=\c$.
\end{thm}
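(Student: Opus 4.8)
The plan is to combine the standard evaluation of cardinal invariants in the random real model with a reflection argument and a closed-discrete strengthening of Lemma \ref{lemmarandom}. First I would record the invariants. In the extension $W$ obtained by adding $\omega_2$ random reals to a model $V$ of CH, random forcing is $\baire$-bounding, so no dominating reals are added and $\d=\omega_1$; on the other hand $\mathrm{non}(\mathcal{M})=\omega_2=\c$, and hence $\cofM=\max(\d,\mathrm{non}(\mathcal{M}))=\omega_2$ (all standard; see the Cich\'on diagram in \cite{bart}). Together with Theorem \ref{farcardinals} this already yields $\omega_1=\d\leq\farcard(\omega\times\cantor)\leq\remcard(\omega\times\cantor)\leq\cofM=\omega_2$, so the whole theorem reduces to the single lower bound $\farcard(\omega\times\cantor)\geq\omega_2$, that is, to ruling out a far set of character $\omega_1$.

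The second step is reflection. Suppose toward a contradiction that $F\subseteq X^{\ast}$, where $X:=\omega\times\cantor$, is far from $X$ with $\chi(F)=\omega_1$, and fix a base $\{U_\alpha:\alpha<\omega_1\}$ of clopen neighborhoods of $F$ in $\beta X$, each of the form $\cl[\beta X]{W_\alpha}$ for a non-compact clopen $W_\alpha\subseteq X$ (possible since $\beta X$ is zero-dimensional). Each trace $W_\alpha$ is coded by a real; since the measure algebra adding $\omega_2$ random reals is ccc and every real of $W$ depends on countably many coordinates, all $\omega_1$ of these codes lie in an intermediate extension $V[G_0]$ using a coordinate set $I$ of size $\omega_1$. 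Factoring the algebra as $\random_I\times\random_{\omega_2\setminus I}$, any coordinate outside $I$ provides a random real $r$ over $V[G_0]$ that is available in $W$.

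The third and central step is to use $r$ to manufacture a closed discrete $D\subseteq X$ with $\cl[\beta X]{D}\cap F\neq\emptyset$, contradicting farness; this is the far analogue of Lemma \ref{lemmarandom}. It suffices to build a closed discrete $D$ meeting $W_\alpha$ for every $\alpha$, because then the sets $\cl[\beta X]{D\cap W_\alpha}$ form a decreasing family of nonempty compacta whose intersection lies in $\cl[\beta X]{D}\cap F$. Working fiberwise, and refining the clopen partition of $X$ so that a single fiber may legitimately contain many points while $D$ remains closed discrete, I would use $r$ as a random point of an appropriate product probability space living over $V[G_0]$ to sample a finite set $D_k$ in each fiber. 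For a fixed $\alpha$ the event ``$D$ misses $W_\alpha$ altogether'' is a null set $B_\alpha$ coded in $V[G_0]$, and since $r$ avoids every null set coded in $V[G_0]$, it avoids each $B_\alpha$ and hence meets every $W_\alpha$.

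The main obstacle is exactly this last construction. Lemma \ref{lemmarandom} produces only a \emph{nowhere dense} witness, assembled from translates $N_k+r$ of fat sets with $\mu(N_k)\to 1$, and its proof crucially exploits the nesting $N_n\subseteq N_m$ for $n\leq m$, which lets a hit at level $n$ propagate to level $m$. This growing-fat-set mechanism does not survive the passage to the finite fibers that closed discreteness forces on us, and the naive remedy of choosing sampling sizes $g(k)$ large enough to make every $B_\alpha$ null confronts a domination requirement that is unavailable precisely because $\d=\omega_1$. Reconciling the random-translation trick with genuinely finite fibers, so that a single random $r$ simultaneously meets all $\omega_1$ given neighborhoods of a far set, is the delicate point on which the argument turns, and it is here that the finer structure of far sets (as opposed to merely nowhere dense ones) together with a more careful per-level measure estimate must enter.
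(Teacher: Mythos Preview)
The statement as written contains a typo: $\farcard$ should read $\remcard$. Indeed, Corollary~\ref{farchar} gives $\farcard(X)=\d$ for every non-compact crowded separable metrizable $X$; since $\omega\times\cantor$ is such a space and $\d=\omega_1$ in the random-real model, one has $\farcard(\omega\times\cantor)=\omega_1$ there, not $\omega_2$. So the literal statement you are trying to prove is false, and the obstacle you ran into in your third step is not a mere technical difficulty but a genuine obstruction: far sets of character $\omega_1$ \emph{do} exist in this model, and no ``far analogue'' of Lemma~\ref{lemmarandom} can be proved.

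For the intended statement with $\remcard$ in place of $\farcard$, your first two steps are exactly right and coincide with the paper's argument: the cardinal-invariant computations reduce everything to showing $\remcard(\omega\times\cantor)\geq\omega_2$, and the ccc/countable-support reflection places any putative base $\{U_\alpha:\alpha<\omega_1\}$ for $F$ into an intermediate model $\model^{\random(\beta)}$ with $\beta<\omega_2$, over which a further coordinate supplies a random real. At that point no additional work is needed: Lemma~\ref{lemmarandom}, applied verbatim with $\model^{\random(\beta)}$ as the ground model, yields a nowhere dense $N\subset\omega\times\cantor$ with $\cl[\beta(\omega\times\cantor)]{N}\cap F\neq\emptyset$, so $F$ is not remote. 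Your attempt to strengthen the lemma to produce a closed discrete witness was aiming at a target that the paper never needed and that, by the previous paragraph, cannot be hit.
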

\begin{proof}
Assume CH in the ground model $\model$. Let $\random(\kappa)$ be the poset that adds $\kappa$ random reals (\cite[p. 99]{bart}) and force with $\random(\omega_2)$. In is known (\cite[Model 7.6.8, p. 393]{bart}) that in the resulting model, $\omega_1=\d<\cofM=\omega_2=\c$. So we just have to argue that if $F\subset(\omega\times\cantor)\sp\ast$ has character $\leq\omega_1$ (in the extension), then it is not remote from $\omega\times\cantor$. 

Let $\{U_\alpha:\alpha<\omega_1\}$ be a local base at $F$. Given $\alpha<\omega_1$, $U_\alpha$ is defined by the countable sequence of open sets $\{U_\alpha\cap (\{n\}\times\cantor):n<\omega\}$. So it is known (\cite[Lemma 1.5.7]{bart}) that there is $\beta(\alpha)<\alpha_2$ such that $U_\alpha\in\model\sp{\random(\beta(\alpha))}$. Define $\beta=\sup\{\beta(\alpha):\alpha<\omega_1\}$, this is an ordinal $<\omega_2$.

Then $F$ and its base are in $\model\sp{\random(\beta)}$. But any random real with support in $\random(\omega_2\setminus\beta)$ is random over $\model\sp{\random(\beta)}$. By Lemma \ref{lemmarandom} we obtain that there is a nowhere dense subset $N\subset\omega\times\cantor$ in $\model\sp{\random(\omega_2)}$ such that $\cl[\beta(\omega\times\cantor)]{N}\cap F\neq\emptyset$. So $F$ is not remote, which is what we wanted to prove.
\end{proof}

Thus, we have the following natural questions.

\begin{ques}\label{remotecardquestion}
Is $\remcard=\cofM$ in ZFC?
\end{ques}

\begin{ques}
Let $X$ be a separable metrizable space that is not compact. Is $\remcard=\remcard(X)$?
\end{ques}

Next, we continue with the value of $\farcard$. In this case we will prove that it is equal to $\d$, see Corollary \ref{farchar}. By Proposition \ref{farseparable}, we need to construct a closed subset of $\beta\Q$ far from $\Q$ with character $\d$. We will achieve this by constructing such a set in an ordinal space (Proposition \ref{farinordinal}). First, for the sake of completeness, let us prove in Lemma \ref{smallordinals} that this is not possible in small ordinals.

Let us recall some ordinals obtained by ordinal exponentiation. If $\alpha,\beta$ are ordinals, then $\alpha\cdot\beta$ is the ordinal that is order-isomorphic to $\alpha\times\beta$ with the lexicographic ordering. Whenever $\alpha$ is a limit ordinal, $\alpha\cdot n$ is homeomorphic to $n$ copies of $\alpha+1$ for $n<\omega$ and $\alpha\cdot\omega$ is homeomorphic to $(\alpha+1)\times\omega$. Let $\omega\sp{\cdot 1}=\omega$. For $1\leq n<\omega$, let $\omega\sp{\cdot(n+1)}=\omega\sp{\cdot n}\cdot\omega$. Finally, $\omega\sp{\cdot\omega}=\sup\{\omega\sp{\cdot n}:1\leq n<\omega\}$.

\begin{lemma}\label{smallordinals}
If $\alpha<\omega\sp{\cdot\omega}$, then $\alpha$ does not have far closed sets.
\end{lemma}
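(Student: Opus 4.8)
The plan is to reduce the statement to a single topological claim and then prove that claim by induction on the Cantor--Bendixson rank. First I would observe that it suffices to show: \emph{for every $p\in\alpha^{*}=\beta\alpha\setminus\alpha$ there is a closed discrete set $D\subseteq\alpha$ with $p\in\cl[\beta\alpha]{D}$.} Indeed, let $F\subseteq\beta\alpha$ be a nonempty closed set. If $F$ meets $\alpha$ in some point $q$, then $N=\{q\}$ is closed discrete and $q\in F\cap\cl[\beta\alpha]{N}$, so $F$ is not far. If instead $F\subseteq\alpha^{*}$, choose $p\in F$ and the closed discrete $D$ given by the claim, so that $p\in F\cap\cl[\beta\alpha]{D}$ and again $F$ is not far. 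Thus the claim implies that $\alpha$ has no nonempty far closed set. Since $\alpha<\omega\sp{\cdot\omega}$, the space $\alpha$ is countable, hence separable metrizable, zero-dimensional and locally compact, and the condition $\alpha<\omega\sp{\cdot\omega}$ is exactly the statement that $\alpha$ has finite Cantor--Bendixson rank. I would therefore prove the displayed claim for the wider class of separable metrizable, zero-dimensional spaces of finite Cantor--Bendixson rank, a class that is closed under passing to clopen subspaces; this closure property is what makes the induction run.

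The induction is on the least $n$ with $X^{(n)}=\emptyset$. For the base case $n=1$ the space $X$ is discrete, so $X$ itself is closed discrete and $\cl[\beta X]{X}=\beta X\supseteq X^{*}$, which settles it. For the inductive step let $X$ have rank $n$ and put $L=X^{(n-1)}$, the top Cantor--Bendixson level. All derivatives are closed, and the derived set of $L$ is $X^{(n)}=\emptyset$, so $L$ is a closed discrete subset of $X$. Given $p\in X^{*}$ I split into two cases according to whether $p$ lies over $L$.

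If $p\in\cl[\beta X]{L}$, then $L$ is itself the required closed discrete set and we are done. Otherwise $p\notin\cl[\beta X]{L}$. Here I use that a separable metrizable zero-dimensional space is strongly zero-dimensional, so $\beta X$ is zero-dimensional and $p$ can be separated from the closed set $\cl[\beta X]{L}$ by a set clopen in $\beta X$; its trace on $X$ is a clopen set $V\subseteq X$ with $p\in\cl[\beta X]{V}$ and $V\cap L=\emptyset$. For a clopen $V$ one checks by induction on $j$ that $V^{(j)}=V\cap X^{(j)}$, whence $V^{(n-1)}=V\cap L=\emptyset$ and $V$ has rank at most $n-1$. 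Moreover $V$, being clopen, is $C^{*}$-embedded, so $\cl[\beta X]{V}=\beta V$ and closures of subsets of $V$ computed in $\beta V$ and in $\beta X$ coincide. Since $p\in\cl[\beta X]{V}=\beta V$ and $p\notin V$, the induction hypothesis applied to the clopen subspace $V$ yields a closed discrete $D\subseteq V$ with $p\in\cl[\beta V]{D}=\cl[\beta X]{D}$; and $D$ is closed discrete in $X$ because $V$ is clopen. This closes the induction and proves the claim.

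The conceptual heart is the dichotomy in the inductive step: a remainder point either concentrates on the top level $L$, in which case the closed discrete set $L$ itself catches it, or it admits a clopen neighborhood missing $L$, which strictly lowers the rank and lets the induction descend. Finiteness of the rank is used essentially, since it guarantees the descent terminates; this is precisely the feature that fails at $\omega\sp{\cdot\omega}$, where the natural clopen pieces have unbounded ranks and far points can appear. The steps I expect to demand the most care are verifying that excising a clopen neighborhood of $L$ genuinely drops the Cantor--Bendixson rank, via the identity $V^{(j)}=V\cap X^{(j)}$ for clopen $V$, and the clean transfer of the closure relation between $\beta V$ and $\beta X$; the separation step needs only that $\beta X$ is zero-dimensional, which for these spaces is routine.
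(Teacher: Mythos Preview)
Your proof is correct and follows essentially the same strategy as the paper: induction on the scattered height, with the key dichotomy that a remainder point either lies over the top Cantor--Bendixson level (a closed discrete set that catches it directly) or can be separated from it by a clopen set of strictly smaller height. The paper carries this out in ordinal-specific language---partitioning $\omega^{\cdot(n+1)}$ into blocks $T_k\cong\omega^{\cdot n}+1$ and using the set $D=\{\omega^{\cdot n}\cdot k:k<\omega\}$ of block endpoints as the top level---whereas you phrase the same induction abstractly for all zero-dimensional separable metrizable spaces of finite Cantor--Bendixson rank; your formulation is slightly cleaner in that it avoids the preliminary reduction from an arbitrary $\alpha<\omega^{\cdot\omega}$ to $\omega^{\cdot n}$.
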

\begin{proof}
Clearly, this statement is trivial if $\alpha<\omega$. If $\omega\sp{\cdot n}\leq\alpha<\omega\sp{\cdot(n+1)}$ for some $1\leq n<\omega$, then $\alpha$ is homeomorphic to a subset of a sum of finitely many copies of $\omega\sp{\cdot n}$. So it is enough to prove that $\omega\sp{\cdot n}$ has no far closed sets for any $1\leq n<\omega$, we will prove this by induction.

Clearly, $\omega$ has no far closed sets. So now, let $F$ be a closed subset in the \v Cech-Stone remainder of $\omega\sp{\cdot (n+1)}$, where $n<\omega$. We may think that $F$ is an filter of clopen sets by Stone duality.

Let $T_0$ be the interval $[0,\omega\sp{\cdot n}]$ and for each $k<\omega$, let $T_{k+1}$ be the interval $(\omega\sp{\cdot n}\cdot k,\omega\sp{\cdot n}\cdot(k+1)]$. Then $\omega\sp{\cdot(n+1)}=\bigcup\{T_k:k<\omega\}$ is a partition of $\omega\sp{\cdot(n+1)}$ into clopen sets homeomorphic to $\omega\sp{\cdot n}$.

Now consider the closed discrete set $D=\{\omega\sp{\cdot n}\cdot k:k<\omega\}$ and assume that $F$ does not intersect the closure of $D$. Then there is $U\in F$ such that $D\cap U=\emptyset$. For each $k<\omega$, $U\cap T_k$ is bounded in $T_k$ so it is homeomorphic to some ordinal strictly less than $\omega\sp{\cdot n}$. Thus, $U$ is the free topological sum of $\omega$ ordinals, each less than $\omega\sp{\cdot n}$. This implies that $U$ is isomorphic to an ordinal less than or equal to $\omega\sp{\cdot n}$. By induction, we know that $F$ is in the closure of some closed discrete subset of $U$. This finishes the proof.
\end{proof}

For any space $X$, we recursively define the derived set $X\sp{(\alpha)}$ by recursion in the ordinal $\alpha$. Let $X\sp{(0)}=X$. If $\alpha$ is any ordinal and $X\sp{(\alpha)}$ is defined, let $X\sp{(\alpha+1)}$ be the set of non-isolated points of $X\sp{(\alpha)}$. If $\beta$ is a limit ordinal, let $X\sp{(\beta)}=\bigcap\{X\sp{(\alpha)}:\alpha<\beta\}$. For every space $X$ there is a minimal ordinal $\alpha<\omega$ where $X\sp{(\alpha)}=X\sp{(\alpha+1)}$, this ordinal is called the scattered height of $X$. It is easy to see that for any $\alpha\leq\omega$, $\omega\sp{\cdot\alpha}$ has scattered height $\alpha$.

\begin{propo}\label{farinordinal}
$\omega\sp{\cdot\omega}$ has a far closed subset of character $\d$.
\end{propo}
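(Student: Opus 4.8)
The plan is to realize $X=\omega\sp{\cdot\omega}$ concretely and reduce farness to a combinatorial escaping statement. Using the recursion $\omega\sp{\cdot(n+1)}=\omega\sp{\cdot n}\cdot\omega$ together with the fact that $\alpha\cdot\omega$ is homeomorphic to $(\alpha+1)\times\omega$, I would write $X$ as a disjoint union $\bigsqcup_{m<\omega}K_m$ of compact clopen pieces, where $K_m$ is a rooted tree of scattered height $m$: its root is the unique point of maximal rank, every non-leaf node is the limit of an $\omega$-sequence of its children, and the leaves are the isolated points. Because each $K_m$ is clopen and compact, a set $N\subseteq X$ is closed discrete if and only if $N\cap K_m$ is finite for every $m$. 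Since $X$ is zero-dimensional, the clopen neighborhoods of a closed $F\subseteq\beta X$ are exactly the sets $\cl[\beta X]{U}$ with $U\subseteq X$ clopen and $F\subseteq\cl[\beta X]{U}$, and $\cl[\beta X]{U}\cap\cl[\beta X]{N}=\emptyset$ precisely when $U\cap N$ is finite. Thus $F=\bigcap\{\cl[\beta X]{U_g}:g\}$ is far if and only if for every closed discrete $N$ there is a neighborhood $U_g$ with $U_g\cap N$ finite, i.e.\ with $U_g\cap N\cap K_m=\emptyset$ for all but finitely many $m$.

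First I would fix a dominating family $\{g_\xi:\xi<\d\}\subseteq\baire$, closed under finite maxima. For each $g$ I want a clopen $U_g=\bigsqcup_m U_g^m$ supported on a tail of pieces $m\geq M(g)$ with $M(g)\to\infty$, whose trace $U_g^m\subseteq K_m$ is a clopen cone (the subtree below some node), chosen so that, as $g$ grows, the cone is pushed deeper into $K_m$ along a branch. The point of descending a branch is that a clopen cone sufficiently deep below a node is disjoint from any prescribed finite set: if $N\cap K_m$ is finite it meets only finitely many cones, so for all large $m$ a deep enough cone avoids it. Encoding, for a fixed closed discrete $N$, the ``depth and width needed to step around $N\cap K_m$'' by a single function $f_N\in\baire$, I would then use a dominating $g\geq\sp{*}f_N$ to guarantee $U_g^m\cap N=\emptyset$ for cofinitely many $m$, which is exactly the escaping clause above. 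So dominating families are what let a single neighborhood dodge a given closed discrete set in almost every piece.

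The delicate point --- and the one I expect to be the main obstacle --- is to make $\{U_g\}$ escape every $N$ while still being the neighborhood base of a \emph{nonempty} $F\subseteq\beta X\setminus X$. Nonemptiness is the finite intersection property: every finite subfamily must have $\bigcap_i U_{g_i}$ meeting infinitely many pieces, so that $F=\bigcap_\xi\cl[\beta X]{U_{g_\xi}}$ is nonempty and, since each $K_m$ is eventually dropped, contained in the remainder. Within any single compact piece $K_m$ the neighborhood traces cannot be separated from all finite sets --- nested nonempty clopen subsets of a compactum have nonempty intersection, and a one-point-per-piece selection is itself closed discrete --- so \emph{all} of the escaping must be spread across the infinitely many pieces of increasing height, while the overlap responsible for nonemptiness must survive on infinitely many pieces. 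Reconciling ``escape on cofinitely many pieces'' with ``overlap on infinitely many pieces'' is possible only because these are compatible for a single pair, and the real work is to arrange the branches and depths --- indexed by the dominating family --- so that any finite subfamily keeps a common cone on infinitely many $m$ (cones are compatible exactly when comparable, i.e.\ when the chosen branches agree far enough down) yet every closed discrete set is eventually side-stepped. This is the combinatorial heart of the construction.

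Finally I would compute the character. The upper bound $\chi(F)\leq\d$ is immediate: $\{\cl[\beta X]{U_{g_\xi}}:\xi<\d\}$ is a neighborhood base of $F$ by construction and directedness under finite maxima. For the lower bound $\chi(F)\geq\d$ I would run the argument of Proposition \ref{farseparable} in this setting: given any neighborhood base of size $\kappa<\d$, in each piece its members reach some finite depth, which yields functions $f_\alpha\in\baire$; a single $f$ with $\{n:f_\alpha(n)<f(n)\}$ cofinite for all $\alpha$ (Theorem 3.6 of \cite{vd62}) then lets me build a closed discrete set that threads to depth $f(m)$ in each $K_m$ and meets every basic neighborhood infinitely often, so $F$ would fail to be far. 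Hence $\chi(F)=\d$, as required. (The same lower-bound argument in fact shows every far closed subset of $\omega\sp{\cdot\omega}$ has character at least $\d$.)
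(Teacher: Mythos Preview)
Your setup --- decomposing $X$ into compact clopen pieces $K_m$ of increasing scattered height, reducing farness to an escaping condition across the pieces, and deducing $\chi(F)\geq\d$ by adapting the proof of Proposition~\ref{farseparable} (crowdedness plays no role there once one has countable compact clopen pieces) --- is correct and matches the paper.

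The gap is exactly the step you label ``the combinatorial heart'', and your proposed mechanism does not close it. You want each $U_g^m$ to be a single cone, pushed deeper as $g$ grows, with the finite intersection property coming from closure of the dominating family under finite maxima. But that route to FIP works only through monotonicity $g\leq g'\Rightarrow U_{g'}^m\subseteq U_g^m$, which forces the apex map $g\mapsto v_g^m$ to be order-preserving from a directed poset into the tree $K_m$; its image is then a chain, so all your cones in $K_m$ lie along a single branch $b_m$. Now $N=\{\text{leaf}(b_m):m<\omega\}$ is closed discrete and meets every cone on $b_m$ regardless of depth, so no $U_g$ escapes it --- your $f_N$ is useless because depth alone cannot dodge a leaf on the fixed branch. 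Dropping monotonicity leaves you with no stated mechanism for FIP. The paper resolves the tension by a different organization: it first enumerates a family $\{D_\alpha:\alpha<\d\}$ of closed discrete sets cofinal under inclusion, and then builds the clopen neighborhoods $J_\alpha$ by recursion of length $\d$, maintaining the invariant that for every finite $F\subseteq\alpha+1$ and every $n$, the intersection $\bigcap_{\gamma\in F}J_\gamma$ has scattered height at least $n$ in $K_m$ for all large $m$. At stage $\beta$, each of the fewer-than-$\d$ many finite $F\subseteq\beta$ yields a function recording how far into the $D_\beta$-free intervals of each piece one must reach to find a high-rank piece of $\bigcap_{\gamma\in F}J_\gamma$; dominating all these functions (here $\beta<\d$ is used) and taking the union of the resulting intervals gives $J_\beta$. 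The scattered-height invariant, rather than a single-cone picture, is what allows escape and overlap to coexist.
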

\begin{proof}
Let $X=\omega\sp{\cdot\omega}$. We will not use the actual identity of the space $X$ but only the following facts. Our space can be written as a topological sum $X=\bigcup\{T_k:k<\omega\}$ where $T_k$ is a compact, linearly ordered space homeomorphic to $\omega\sp{\cdot k}+1$ for every $k<\omega$. Given $k<\omega$, notice that the scattered height of $X_k$ is $k+1$. The following fact will be crucial for our proof.

\vskip10pt
\noindent$(\ast)$ Given $k,n<\omega$, if an open subset $U\subset X_k$ has scattered height at least $n+1$, then there is an interval $[x,y]\in U$ such that $y\in U\sp{(n+1)}$ and an increasing sequence $\{y_i:i<\omega\}\subset(x,y)$ that converges to $y$ with $y_i\in U\sp{(n)}$ for all $i<\omega$.
\vskip10pt

Observe that if $D$ is a closed discrete subset of $X$, then for every $k<\omega$, $D\cap T_k$ is finite. Also, $T_k$ is a countable set for every $k<\omega$. Thus, it follows that there exists a collection of closed discrete sets $\{D_\alpha:\alpha<\d\}$  such that that every time $D\subset X$ is closed and discrete, there exists $\alpha<\d$ such that $D\subset D_\alpha$. Let us assume, without loss of generality, that $\max{(T_k)}\in D_\alpha$ for every $k<\omega$ and $\alpha<\d$.

By recursion, we will construct a collection $\{J_\alpha:\alpha<\d\}$ of clopen sets with the following two properties:
\begin{itemize}
\item[(a)] for every $\alpha<\d$, $D_\alpha\cap J_\alpha=\emptyset$, and
\item[(b)] for every $r<\omega$, $\{\alpha(i):i<r\}\subset\d$ and $n<\omega$, there exists $m<\omega$ such that if $m\leq k<\omega$, then $T_k\cap\big(\bigcap\{J_{\alpha(i)}:i<r\}\big)$ has scattered height at least $n$.
\end{itemize}

Clearly, properties (a) and (b) imply that $\bigcap\{\cl[\beta X]{J_\alpha}:\alpha<\d\}$ is a far closed set of character $\d$. So we just need to carry out this construction to finish the proof.

So assume we are in step $\beta<\d$ of the construction and we have constructed $\{J_\alpha:\alpha<\beta\}$ satisfying (a) and (b). For each $F\in[\beta]\sp{<\omega}$, let $J_F=\bigcap\{J_\alpha:\alpha\in F\}$ and choose an enumeration $\{F_\alpha:\alpha<\beta\}=[\beta]\sp{<\omega}$.

Given $k<\omega$, $D_\beta$ naturally partitions the linearly ordered space $T_k$ into a finite family of pairwise disjoint intervals in the following way. Let $d_\pair{k,0}<d_\pair{k,1}<\ldots<d_\pair{k,r(k)}$ be an increasing enumeration of $D_\beta\cap T_k$. Let also $a_\pair{k,0}=\min{(T_k)}$ and let $a_\pair{k,i+1}$ be the immediate successor of $d_\pair{k,i}$ for every $i<r(k)$. Then we can write $T_k=\bigcup\{[a_\pair{k,i},d_\pair{k,i}]:i\leq r(k)\}$. So consider the set 
$$
S=\{\pair{k,i}:k<\omega,i\leq r(k),[a_\pair{k,i},d_\pair{k,i})\neq\emptyset\}
$$
which gives us a countable infinite way to enumerate all non-empty intervals considered. 

Now, given $s\in S$, we will choose $\{b_s(j):j<\omega\}\subset [a_s,d_s)$. If $d_s$ is a limit, then choose $\{b_s(j):j<\omega\}$ to be increasing and converging to $d_s$. If $d_s$ is isolated, choose $b_s(j)$ to be the immediate predecessor of $d_s$ for all $j<\omega$.

Now, fix $\alpha<\beta$ for a moment, we shall define a function $f_\alpha:S\to\omega$. By (b) in the inductive hypothesis, we may choose a strictly increasing sequence $\{m(i):i<\omega\}\subset\omega$ such that if $m(i)\leq j$, then $T_j\cap J_{F_\alpha}$ has scattered height at least $i+1<\omega$. 

So let $k<\omega$. If $k<m(0)$, define $f_\alpha(\pair{k,i})$ arbitrarily for every $\pair{k,i}\in S$. Now assume that $m(i)\leq k<m(i+1)$. By property $(\ast)$ above, there is an interval $[x,y]\subset T_k\cap J_{F_\alpha}$ with $y\in (T_k\cap J_{F_\alpha})\sp{(i+1)}$. Notice that $y$ is a limit ordinal so $y\neq a_\pair{k,i}$ for any $i<r(k)$. So there is $j<\omega$ such that $y\in (a_\pair{k,j},d_\pair{k,j}]$. Again, since $y$ is a limit ordinal, $\pair{k,j}\in S$. If $y\neq d_\pair{k,j}$, let $f_\alpha(\pair{k,j})$ be large enough so that $y\in [a_\pair{k,j},b_\pair{k,j}(f_\alpha(\pair{k,j}))]$. If $y=d_\pair{k,j}$, then let $f_\alpha(\pair{k,j})$ be large enough so that $[a_\pair{k,j},b_\pair{k,j}(f_\alpha(\pair{k,j}))]\cap(T_k\cap J_{F_\alpha})\sp{(i)}$, this is possible by $(\ast)$. If $i\in\omega\setminus\{j\}$, define $f_\alpha(\pair{k,i})$ arbitrarily. Notice that with this construction, the following property holds.

\vskip10pt
\noindent $(\star)$ For $i<\omega$, $m(i)\leq k$ there exists $j<\omega$ such that 
$$
[a_\pair{k,j},b_\pair{k,j}(f_\alpha(\pair{k,j}))]\cap T_k\cap J_{F_\alpha}
$$
has scattered height at least $i$.
\vskip10pt

Thus, we have defined a collection $\{f_\alpha:\alpha<\beta\}$ of functions between two countable sets. Since $\beta<\d$, there exists a function $f:S\to\omega$ such that for every $\alpha<\beta$, the set $\{s\in S:f_\alpha(s)\leq f(s)\}$ is cofinite in $S$. Finally, we define
$$
J_\beta=\bigcup\{[a_s,b_s(f(s))]:s\in S\},
$$
which is a clopen set that misses $D_\beta$. Property (b) for $\{\alpha(i):i<r\}\subset\beta$ can be easily deduced from property $(\star)$ in the construction. This completes the proof.
\end{proof}

\begin{thm}\label{farseparablefinal}
Let $X$ be any non-compact, separable, metrizable space. Then the following are equivalent.
\begin{itemize}
\item[(a)] $X$ has a far closed set of character $\d$,
\item[(b)] $X$ has a far closed set, and
\item[(c)] either $X$ has a crowded non-compact subset or $X$ has a clopen subspace homeomorphic to $\omega\sp{\cdot\omega}$. 
\end{itemize}
\end{thm}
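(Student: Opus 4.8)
The equivalence (a)$\Rightarrow$(b) is immediate, so the plan is to prove (c)$\Rightarrow$(a) and (b)$\Rightarrow$(c). The organizing tool will be the Cantor--Bendixson decomposition $X=P\cup S$, where $P$ is the perfect kernel (the largest closed crowded subspace, equivalently the stable value of the iterated derived-set operator) and $S=X\setminus P$ is the open scattered remainder; since $X$ is separable metrizable, $S$ is a countable, scattered, $0$-dimensional space and $P$ is either empty or a perfect (closed crowded) subspace. The two alternatives in (c) will match this decomposition: the first corresponds to $P$ being non-compact --- note that a crowded subspace with compact closure contributes no points to $\beta X\setminus X$, so what is really needed is a crowded subspace whose closure is non-compact, i.e.\ $P$ non-compact --- and the second isolates the behaviour of $S$.

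For (c)$\Rightarrow$(a) I would treat the two alternatives separately. If $X$ has a clopen subspace $W\cong\omega\sp{\cdot\omega}$, then $\cl[\beta X]{W}$ is a clopen copy of $\beta W$ inside $\beta X$; pushing the character-$\d$ far set of $\omega\sp{\cdot\omega}$ produced in Proposition \ref{farinordinal} into $\cl[\beta X]{W}$ yields a set $F\subseteq\beta X$. Because $W$ is clopen one has $\cl[\beta X]{D}\cap\cl[\beta X]{W}=\cl[\beta X]{D\cap W}$ for every closed discrete $D\subseteq X$, so $F$ remains far from $X$, and clopenness also preserves its character $\d$. If instead $P$ is non-compact, Proposition \ref{farseparable} already gives that any far set has character at least $\d$, so the task is to construct one of character exactly $\d$. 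Here I would build a decreasing $\d$-indexed family of non-compact clopen (or regular-closed) subsets of $X$ concentrating on $P$, whose closures intersect in the desired far set; the family is driven by a dominating family of size $\d$, used so that every closed discrete $D\subseteq X$ is eventually \emph{dodged} by some member.

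For (b)$\Rightarrow$(c), assume $X$ carries a nonempty far set $F$. If $P$ is non-compact we are in the first alternative, so suppose $P$ is compact. Since $F\subseteq\beta X\setminus X$ and $\cl[\beta X]{P}=P$, one separates $F$ from $P$ by a neighbourhood $V\supseteq P$ with $\cl[\beta X]{V}\cap F=\emptyset$; then $F$ lives over the closed subspace $X'=X\setminus V\subseteq S$, is far from the countable scattered space $X'$, and $X'$ must be non-compact (were it compact, $F$ would be empty). The heart of this direction is a strengthening of Lemma \ref{smallordinals}: \emph{a countable scattered non-compact space of finite scattered height has no far set}. This is proved by induction on the height exactly as in Lemma \ref{smallordinals} --- if $F$ avoids the (closed discrete) top level, then $F$ concentrates on a non-compact clopen subset of strictly smaller height, contradicting the inductive hypothesis. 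Consequently $X'$ has infinite height, and every clopen set carrying $F$ has infinite height; from this I would extract a discrete-at-infinity sequence of compact clopen pieces of unbounded finite height, whose union is a clopen copy of $\omega\sp{\cdot\omega}$, giving the second alternative.

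The main obstacle is the character-$\d$ construction in the crowded case. Unlike $\omega\sp{\cdot\omega}$, a crowded space has no $\sigma$-compact clopen skeleton, and the device of Proposition \ref{farinordinal} --- dominating the closed discrete sets \emph{by containment} through a size-$\d$ family --- fails outright, since in a space such as $\Q$ cofinal families of closed discrete sets have size $\c$. The far set must instead be produced by out-refining a countable $\pi$-base along a $\d$-scale so that each closed discrete set is dodged while the chosen non-compact sets retain the finite intersection property, and this must be carried out over the merely \emph{closed} (not clopen) kernel $P$, so that closed discrete sets of $X$ approaching $P$ from the scattered part are controlled as well. A secondary difficulty, on the (b)$\Rightarrow$(c) side, is ensuring the extracted $\omega\sp{\cdot\omega}$ is clopen in $X$ and handling scattered spaces whose high-rank points fail to be locally compact; I expect the presence of a far set to force local compactness along cofinally many ranks, but verifying this cleanly is the delicate point.
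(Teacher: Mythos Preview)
Your principal obstacle---constructing a far set of character $\d$ directly over a crowded kernel---is real for the route you sketch, but the paper sidesteps it entirely: the crowded alternative of (c)$\Rightarrow$(a) \emph{also} reduces to Proposition~\ref{farinordinal}. If $X$ has a crowded subset with non-compact closure, choose a discrete family $\{F_n:n<\omega\}$ of nonempty closed crowded subsets of $X$; inside each crowded metrizable $F_n$ it is elementary (iterate convergent sequences) to build a closed copy $T_n$ of the ordinal $\omega^{\cdot n}+1$. Then $T=\bigcup_{n<\omega}T_n$ is a closed, locally compact subspace of $X$ homeomorphic to $\omega^{\cdot\omega}$, and Proposition~\ref{farinordinal} hands you a far closed $F\subset\beta T\setminus T$ of character $\d$. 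Normality of $X$ gives $\cl[\beta X]{T}=\beta T$ with $\beta T\setminus T\subset\beta X\setminus X$, and shows $F$ is far from $X$ as well (for closed discrete $D\subset X$, split $D$ as $(D\cap T)\cup(D\setminus T)$: the first piece is handled by farness in $T$, the second has closure disjoint from $\beta T\supset F$). Since $T$ is $G_\delta$ in the metrizable $X$, the character of $F$ in $\beta X$ stays $\d$. So both alternatives of (c) route through the ordinal construction; the $\d$-scale argument you were attempting over $\Q$, and the cofinality obstruction you correctly diagnosed, simply never come up.

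For (b)$\Rightarrow$(c) your Cantor--Bendixson reduction and the finite-height induction generalizing Lemma~\ref{smallordinals} are sound and in fact more explicit than the paper, which just cites that lemma. The concerns you raise about local compactness at high ranks and about obtaining the copy of $\omega^{\cdot\omega}$ clopen in $X$ rather than merely in the scattered remainder are legitimate bookkeeping, but they are secondary. The essential idea your proposal is missing is the embedding trick above: build $\omega^{\cdot\omega}$ as a \emph{closed} (not clopen) subspace in the crowded case and transfer the far set by normality.
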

\begin{proof}
Clearly, (a) implies (b). The fact that (b) implies (c) follows from Lemma \ref{smallordinals}. Now assume (c) and let $X$ be any non-compact, separable, metrizable space. If $X$ has a crowded subset with non-compact closure, then there is a discrete collection $\{F_n:n<\omega\}$ of closed crowded subsets of $X$. Then for each $n<\omega$ it is easy to construct $T_n\subset F_n$ homeomorphic to $\omega\sp{\cdot n}$. By Proposition \ref{farinordinal}, $T=\bigcup\{T_n:n<\omega\}$ has a far closed set $F$ of character $\d$ (in $\beta T$). Since $X$ is normal and $T$ is locally compact, we may assume that $\beta T\setminus T\subset\beta X\setminus X$ so $F\subset\beta X\setminus X$. Also, by normality, $F$ is also far from $X$. Since $T$ is a $G_\delta$ in $X$, it is easy to see that $F$ has character $\d$ in $\beta X$. If we have the case that $X$ has a clopen subspace homeomorphic to $\omega\sp{\cdot\omega}$, it also follows from Proposition \ref{farinordinal} that $X$ has a far closed set of character $\d$. Thus, (a) holds.
\end{proof}

\begin{coro}\label{farchar}
For every non-compact, crowded, separable metrizable space $X$, $\farcard(X)=\d$.
\end{coro}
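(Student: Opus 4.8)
The plan is to derive Corollary \ref{farchar} directly from the machinery already assembled, rather than constructing anything new. The statement claims that every non-compact, crowded, separable metrizable space $X$ satisfies $\farcard(X)=\d$. I would prove this as a two-sided inequality. The lower bound $\d\leq\farcard(X)$ is immediate from Proposition \ref{farseparable}, since $X$ is crowded, separable and Tychonoff; any far closed set $F\subset\beta X$ has character at least $\d$, so the minimal such character is at least $\d$. For the upper bound I need to exhibit a far closed set of character exactly $\d$, which is precisely what Theorem \ref{farseparablefinal} provides once I verify its hypothesis (c).

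The key observation is that a crowded space is exactly a space with no isolated points, so $X$ itself is a crowded non-compact subset of $X$. Thus condition (c) of Theorem \ref{farseparablefinal} holds trivially (take the crowded non-compact subset to be $X$). By the equivalence in that theorem, (c) implies (a), so $X$ has a far closed set of character $\d$. Therefore $\farcard(X)\leq\d$.

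Combining the two bounds gives $\farcard(X)=\d$. The proof is genuinely short and its only content is recognizing that the two external inputs fit together: the lower bound comes from separability plus crowdedness via Proposition \ref{farseparable}, and the upper bound comes from crowdedness (which immediately yields hypothesis (c)) via Theorem \ref{farseparablefinal}.

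I do not anticipate any real obstacle here, since all the difficult work—the random-real independence results, the ordinal construction in Proposition \ref{farinordinal}, and the equivalence in Theorem \ref{farseparablefinal}—has already been carried out earlier in the paper. The one point that deserves a sentence of care is confirming that ``crowded'' in the hypothesis of the corollary is the same as ``no isolated points,'' so that $X$ qualifies as its own crowded non-compact subset and hence satisfies clause (c). Once that identification is made explicit, the corollary is simply the conjunction of Proposition \ref{farseparable} and the implication (c)$\Rightarrow$(a) of Theorem \ref{farseparablefinal}.
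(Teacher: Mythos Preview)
Your proposal is correct and matches the paper's intended argument: the corollary is stated without proof precisely because it is the immediate conjunction of the lower bound from Proposition~\ref{farseparable} (or Theorem~\ref{farcardinals}) and the implication (c)$\Rightarrow$(a) of Theorem~\ref{farseparablefinal}, with $X$ itself serving as the crowded non-compact subset.
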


\section{Far points in countable spaces}\label{sectioncountablefar}

As announced in the introduction, in this section we will study the existence of far points for some countable crowded spaces. Let us explain our method to construct far points, which was inspired by van Douwen's original method from \cite{vd51}.

Consider a normal space $X$ and a collection of closed subsets $\G$ of $X$. We say that $\G$ is \emph{far} if every time $D\subset X$ is closed and discrete there is $G\in\G$ such that $D\cap G=\emptyset$. Also, given $n<\omega$, we will say that $\G$ is $n$-linked if the intersection of any $n$ elements of $\G$ is non-empty. 

Now, assume that $X$ is a normal space that is equal to the disjoint union $\bigoplus_{n<\omega} X_n$ and for each $n<\omega$, $\G_n$ is a $(n+1)$-linked remote collection of closed subsets of $X_n$. Then the collection
$$
\G=\Big\{\bigcup{\{G_n: n<\omega\}}:G_n\in\G_n\textrm{ whenever }n<\omega\Big\}
$$
is a far collection of closed sets. It easily follows that any point in $\bigcap\{\cl[\beta X]{G}:G\in\G\}$ is far.

In our discussion, we are dealing with countable regular crowded spaces. So clearly these spaces are normal and can be partitioned as a disjoint union of countable spaces. Thus, we may use the method described above to prove the existence of remote points in countable spaces.

Our first example is in ZFC but we assume some topological conditions.

\begin{propo}\label{nlinkedfarpicharischar}
Let $n<\omega$. Assume that $X$ is a countable regular crowded space such that every point of $X$ has its character equal to its $\pi$-character. Then $X$ has a far $n$-linked collection of clopen sets.
\end{propo}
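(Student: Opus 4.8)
The plan is to construct $\G$ by a recursion that runs through the closed discrete subsets of $X$ and, at each step, throws in one clopen set that avoids the current closed discrete set while keeping the collection $n$-linked. I would begin with two standing reductions. First, a countable regular space is Lindel\"of, hence paracompact and collectionwise normal, and it is zero-dimensional; so $X$ has a base of clopen sets, and every closed discrete $D\subset X$ has a clopen neighborhood of the form $\bigcup\{V_d:d\in D\}$, where $\{V_d:d\in D\}$ is a discrete family of clopen sets with $d\in V_d$, and this neighborhood can be taken inside any prescribed open set containing $D$. (Recall also that a countable crowded space is never compact, so $X$ admits partitions into infinitely many clopen crowded pieces.) Second, it is cleaner to pass to complements: writing each member of $\G$ as $X\setminus H$ with $H$ clopen, the collection is $n$-linked precisely when no $n$ of the sets $H$ cover $X$, and it is far precisely when every closed discrete subset of $X$ lies inside one of the sets $H$. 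Note that singletons are closed discrete, so $\bigcap\G=\emptyset$ is forced and $\G$ can be no filter.

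The engine producing $n$-linkedness is a splitting into $n+1$ pieces. The point I would isolate is: if $X=C_0\cup\cdots\cup C_n$ is a partition into nonempty clopen crowded pieces and each excluded set $H$ lies in a single $C_j$, then any $n$ of the sets $H$ involve at most $n<n+1$ indices, so some piece $C_d$ is disjoint from all of them and witnesses that their union is proper; thus $\G$ is automatically $n$-linked. A single partition cannot give farness, because a closed discrete set usually meets several pieces and so fits in no single $C_j$. The repair uses the fact that $n$-linkedness composes over disjoint clopen pieces: if $X=\bigcup\{P_k:k<\omega\}$ is a partition into clopen crowded pieces, then for any sets that are unions $\bigcup_k G_k$ with $G_k\subset P_k$, the intersection of $n$ of them equals the union over $k$ of the piecewise intersections, so the whole family is $n$-linked as soon as it is $n$-linked on each $P_k$. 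This lets me dodge a closed discrete $D$ by a set that is \emph{co-thin on each piece}: inside every $P_k$ I delete a thin clopen neighborhood of $D\cap P_k$ and keep the rest, and such deletions preserve nonemptiness of the piecewise intersections.

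Assembling these, at stage $\xi$ I would have an $n$-linked collection whose excluded sets are built co-thinly over a fixed refining scheme of clopen partitions, and I would dodge $D_\xi$ by excluding a clopen neighborhood $H_\xi$ of $D_\xi$ that is thin enough that, for every choice of $n-1$ previously excluded sets $M_1,\dots,M_{n-1}$, the nonempty clopen remainder $X\setminus(M_1\cup\cdots\cup M_{n-1})$ still meets the complement of $H_\xi$. This last clause is exactly where $\chi=\pi\chi$ must be used, and I expect it to be the main obstacle: a single clopen neighborhood of $D_\xi$ has to avoid swallowing \emph{any} of the (possibly very many, and possibly high-character) remainders at once, and the dangerous case is a remainder all of whose points cluster at $D_\xi$. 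I would try to rule this out locally, point by point along $D_\xi$: because at each $d\in D_\xi$ a local base is no larger than a local $\pi$-base, one can choose the neighborhoods $V_d$ so that every clopen set approaching $d$ retains a point outside $V_d$, which is precisely what stops a remainder from being absorbed. Checking that these local choices can be made coherently for the whole recursion—so that each $n$-fold test always leaves a common point free—is the heart of the matter; granting it, the final collection $\G$ is $n$-linked by the $(n+1)$-piece mechanism and far because each closed discrete set was dodged at its own stage.
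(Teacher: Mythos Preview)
Your recursion through the closed discrete sets runs into a cardinality obstacle that you have not resolved. At stage $\xi$ you need a single clopen neighbourhood $H_\xi$ of $D_\xi$ that leaves a point of every one of the $\binom{|\xi|}{n-1}$ remainders $X\setminus(M_1\cup\cdots\cup M_{n-1})$ from earlier stages. The hypothesis $\chi(d)=\pi\chi(d)$ only guarantees that a family of \emph{fewer than} $\chi(d)$ nonempty open sets is not a local $\pi$-base at $d$, so some neighbourhood $V_d\ni d$ fails to swallow any of them. But there are $\c$ closed discrete subsets of a countable crowded space, so your recursion has length $\c$, while $\chi(d)$ may well be $\omega_1<\c$; at a late stage the family of remainders accumulating at $d$ can vastly exceed $\chi(d)$, and nothing forces a single $V_d$ to work for all of them. (Your phrasing ``every clopen set approaching $d$ retains a point outside $V_d$'' cannot be literally true---$V_d$ itself approaches $d$---and the honest version with a cardinality bound is exactly where the argument stalls.) You yourself flag this step and then grant it; that is the gap.

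The paper's proof avoids any transfinite recursion through closed discrete sets. It fixes once and for all an enumeration $\{B_\alpha:\alpha<\kappa\}$ of a clopen base, $\kappa=w(X)$, arranged so that each initial segment $\B_\lambda$ is closed under Boolean operations. For every closed discrete $D$ and $x\in D$ it builds, simultaneously and independently, a decreasing chain $U(D,x,0)\supsetneq U(D,x,1)\supsetneq\cdots$ of clopen neighbourhoods of $x$, together with ordinals $\lambda(D,x,m)<\chi(x)$ recording how much of the base has been used so far; the key property is that any nonempty element of $\B_{\lambda(D,x,m)}$ meeting $U(D,x,m)$ still meets $U(D,x,m)\setminus U(D,x,m+1)$, which is exactly what $\chi=\pi\chi$ yields because $|\B_{\lambda(D,x,m)}|<\chi(x)$. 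The far sets are $F(D,n)=X\setminus\bigcup_{x\in D}U(D,x,n+1)$. The $(n+1)$-linkedness of $\{F(D,n):D\textrm{ closed discrete}\}$ is then proved by a \emph{finite} minimum argument: given $n+1$ such sets, pick the pair $(D_0,d_0)$ with minimal $\lambda(D_0,d_0,1)$, carve out a nonempty $W_0\in\B_{\lambda(D_0,d_0,1)}$ inside $F(D_0,0)$; minimality puts $W_0$ into the base-segment associated with the next pair, so the survival property lets you shrink again; after at most $n$ steps you have a point in the full intersection. This ordinal bookkeeping replaces your transfinite compatibility check by a finitary descent, and that is the idea your sketch is missing.
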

\begin{proof}
Let $\kappa$ be the weight of $X$. First, it is not hard to see that it is possible to find a base $\{B_\alpha:\alpha<\kappa\}$ of $X$ with the following properties:
\begin{itemize}
\item[(i)] for every $x\in X$, $\{B_\alpha:\alpha<\chi(x,X)\}$ contains a local base at $x$ in which every element is repeated cofinally, and
\item[(ii)] for every $\omega\leq\beta<\kappa$ there is $\gamma<\kappa$ such that $|\gamma|=|\beta|$ and the Boolean algebra generated by $\{B_\alpha:\alpha<\beta\}$ is contained in $\{B_\alpha:\alpha<\gamma\}$.
\end{itemize}
If $\lambda\leq\kappa$, let $\B_\lambda$ denote $\{B_\alpha:\alpha<\lambda\}$.

Now fix some infinite discrete closed set $D\subset X$. Let $\{U(D,x,0):x\in D\}$ be a discrete collection of pairwise disjoint clopen subsets of $X$ such that $x\in U(D,x,0)$ for each $x\in D$. Clearly, we may assume that $U(D,x,0)\in\B_{\chi(x,X)}$. For each $x\in D$ and $m<\omega$ we shall find $\lambda(D,x,m)<\chi(x,X)$ and a clopen set $U(D,x,m)\in\B_{\lambda(D,x,m)}$ in such a way that the following conditions hold.
\begin{itemize}
\item[(a)] $\lambda(D,x,m)<\lambda(D,x,m+1)$ for all $m<\omega$,
\item[(b)] $U(D,x,m+1)\subsetneq U(D,x,m)$ for all $m<\omega$,
\item[(c)] $U(D,x,0)\in B_{\lambda(D,x,0)}$,
\item[(d)] the Boolean algebra generated by $\B_{\lambda(D,x,m)}\cup\{U(D,x,m+1)\}$ is contained in $\B_{\lambda(D,n,m+1)}$ for all $m<\omega$, and
\item[(e)] if $V\in\B_{\lambda(D,x,m)}$ is such that $V\cap U(D,x,m)\neq\emptyset$, then $(V\cap U(D,x,m))\setminus U(D,x,m+1)\neq\emptyset$.
\end{itemize}

For $m=0$, let $\lambda(D,x,0)$ be any ordinal $<\chi(x,X)$ such that (c) holds. Now assume that we have constructed $U(D,x,m)$ and $\lambda(D,x,m)$ for $m\leq k$ and we want to construct those sets for $m=k+1$. First, notice that the collection $\V=\{B\cap U(D,x,k):B\in\B_{\lambda(D,x,k)}\}\setminus\{\emptyset\}$ is of size $<\chi(x,X)$. Let $W$ be an open set that contains $x$ and witnesses that $\V$ is not a $\pi$-base at $x$. That is, $V\setminus W\neq\emptyset$ for all $V\in\V$. By (i), we may choose $U(D,x,m+1)\in\B_{\chi(x,X)}$ be such that $x\in U(D,x,m+1)\subset W$. Finally, by (ii) we may choose an ordinal $\lambda(D,x,m+1)<\chi(x,X)$ such that the Boolean algebra generated by $\B_{\lambda(D,x,m)}\cup\{U(D,x,m+1)\}$ is contained in $\B_{\lambda(D,x,m+1)}$. Clearly, conditions from (a) to (e) hold and we have finished the construction of these sets. 

Then, given an infinite closed discrete set $D$, we define 
$$
F(D,m)=X\setminus(\bigcup\{U(D,x,m+1):x\in D\}),
$$
which is clearly a clopen subset of $X$ that misses $D$. Also notice that $F(D,m)\subset F(D,m+1)$ for all $m<\omega$. We will next prove that the collection of all sets of the form $F(D,n)$ where $D$ is infinite, closed and discrete is $(n+1)$-linked. Clearly, this is enough to finish the proof.

So let $\D$ be a collection of $n+1$ infinite closed discrete subsets of $X$. Choose $d_0\in D_0\in\D$ be such that $\lambda(D_0,d_0,1)\leq\lambda(D,x,1)$ every time that $D\in\D$ and $x\in D$. Let $\lambda(0)=\lambda(D_0,d_0,1)$. Thus, $W_0=U(D_0,d_0,0)\setminus U(D_0,d_0,1)$ is a non-empty element of $B_{\lambda(0)}$ (properties (d) and (e)) such that $W_0\subset F(D_0,0)$.

If for all $D\in\D\setminus\{D_0\}$ and $x\in D$ we have that $W_0\cap U(D,x,1)=\emptyset$, we would have that $W_0$ is a non-empty subset of $\bigcap\{F(D,n):D\in\D\}$. So assume this is not the case. Then there are $d_1\in D_1\in\D\setminus\{D_0\}$ be such that $W_0\cap U(D_1,d_1,1)\neq\emptyset$ and $\lambda(D_1,d_1,2)\leq\lambda(D,x,2)$ every time that $D\in\D\setminus\{D_0\}$, $x\in D$ and $W_0\cap U(D,x,1)\neq\emptyset$. Notice that $W_0\in \B_{\lambda(D_1,d_1,1)}$ by the minimality of $\lambda(0)$. Let $\lambda(1)=\lambda(D_1,d_1,2)$. By properties (d) and (e), it follows that $W_1=(W_0\cap U(D_1,d_1,1))\setminus U(D_1,d_1,2)$ is a non-empty element of $\B_{\lambda(1)}$. Notice that $W_1\subset F(D_0,0)\cap F(D_1,1)$ so in particular $F(D_0,1)\cap F(D_1,1)\neq\emptyset$.

So in general, assume that we have $W_k\subset\bigcap\{F(D_i,i):i\leq k\}$, where $\lambda(i)=\lambda(D_i,d_i,i+1)\leq\lambda(D,x,i+1)$ for all $D\in\D\setminus\{D_j:j<i\}$, $x\in D$ such that $W_i\cap U(D,x,i+1)\neq\emptyset$ for all $i\leq k$. Further, $W_{i+1}\subset W_i$ for all $i<k$. So, if possible, again let $D_{k+1}\in\D\setminus\{D_i:i\leq k\}$ and $d_{k+1}\in D_{k+1}$ such that $\lambda(D_{k+1},d_{k+1},k+2)$ is minimal. By the minimality of $\lambda(k)$, $W_k\in\B_{\lambda(D_{k+1},d_{k+1},k+1)}$. Let $\lambda(k+1)=\lambda(D_{k+1},d_{k+1},k+2)$. By properties (d) and (e), it follows that $W_{k+1}=(W_k\cap U(D_k,d_k,k))\setminus U(D_k,d_k,k+1)$ is a non-empty element of $\B_{\lambda(k)}$. Thus, we obtain that $W_{k+1}\subset\bigcap\{F(D_i,i):i\leq k+1\}$. 

So with this procedure we obtain that $\bigcap\{F(D_i,i):i\leq n\}\neq\emptyset$. Since $F(D_i,i)\subset F(D_i,n)$ for all $i\leq n$, we obtain that $\bigcap\{F(D_i,n):i\leq n\}\neq\emptyset$. Thus, our far collection is $(n+1)$-linked, which was what we had left to prove.
\end{proof}

As a corollary, we obtain the following.

\begin{thm}\label{farpicharischar}
Every countable regular crowded space where every point has its character equal to its $\pi$-character has far points.
\end{thm}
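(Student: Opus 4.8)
The plan is to derive Theorem \ref{farpicharischar} as a direct corollary of Proposition \ref{nlinkedfarpicharischar} together with the general construction method described at the opening of this section. The hypothesis of the theorem is exactly the hypothesis of the proposition, so for each fixed $n<\omega$ we already have a far $(n+1)$-linked collection of clopen sets. What the method requires, however, is to combine such collections across a countable partition of the space, so the first step is to set up that partition.

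First I would use that $X$ is countable, regular and crowded to write $X=\bigoplus_{n<\omega} X_n$ as a disjoint union of (necessarily regular, crowded) clopen pieces. Since $X$ is countable it is certainly normal, and each $X_n$ inherits the property that character equals $\pi$-character at every point (this is a local property, preserved by passing to clopen subspaces). Hence Proposition \ref{nlinkedfarpicharischar} applies to each $X_n$ with the value $n$, producing for every $n<\omega$ a far $(n+1)$-linked collection $\G_n$ of clopen subsets of $X_n$. I should note the one subtlety here: the method in the preamble asks for $\G_n$ to be \emph{remote}, but what is actually needed and used is that each $\G_n$ is far in $X_n$ and $(n+1)$-linked, which is precisely what the proposition delivers; so I would invoke the proposition in exactly that form.

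Next I would form the product collection
$$
\G=\Big\{\bigcup{\{G_n:n<\omega\}}:G_n\in\G_n\textrm{ for all }n<\omega\Big\}
$$
and verify that $\G$ is a far collection of closed subsets of $X$, which is the content of the general observation stated at the start of the section. The verification is the routine heart of the matter: given any closed discrete $D\subset X$, each trace $D\cap X_n$ is closed discrete in $X_n$, so by farness of $\G_n$ there is some $G_n\in\G_n$ with $(D\cap X_n)\cap G_n=\emptyset$; the union $G=\bigcup_n G_n\in\G$ then misses $D$ entirely. The $(n+1)$-linkedness of the $\G_n$ guarantees that $\G$ has the finite intersection property, hence $\bigcap\{\cl[\beta X]{G}:G\in\G\}\neq\emptyset$ by compactness of $\beta X$.

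Finally, any point $p$ in that intersection is a far point of $X$: if $D$ were a closed discrete set with $p\in\cl[\beta X]{D}$, pick $G\in\G$ with $G\cap D=\emptyset$; since $G$ is clopen in $X$ its $\beta X$-closure separates $p$ (which lies in $\cl[\beta X]{G}$) from $\cl[\beta X]{D}$, a contradiction. I do not expect a genuine obstacle here, since Proposition \ref{nlinkedfarpicharischar} has already absorbed all the hard work; the only point demanding care is confirming that the finite-intersection argument correctly uses the $(n+1)$-linkedness at the right level $n$ so that any finitely many members of $\G$ meet, which is exactly why the proposition was proved for every $n$ rather than a single fixed one.
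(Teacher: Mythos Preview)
Your proposal is correct and follows exactly the approach the paper intends: the theorem is stated in the paper as an immediate corollary of Proposition~\ref{nlinkedfarpicharischar} together with the partition-and-glue method described at the start of Section~\ref{sectioncountablefar}, and you have spelled out precisely those steps (including the correct observation that ``remote'' in the preamble should read ``far'' for this application, and that the $(n+1)$-linkedness at level $n$ is what yields the finite intersection property of $\G$).
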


Now we would like remove the requirement that character equals $\pi$-character everywhere. Unfortunately, we only have a partial result depending on the regularity of the character. Let us start with the case when the character has uncountable cofinality. The proof of the following result is very similar to the proof of Proposition \ref{nlinkedfarpicharischar}.

\begin{propo}
Let $\kappa$ be a cardinal of uncountable cofinality and $n<\omega$. Assume that $X$ is a countable regular crowded space such that every point of $X$ has $\pi$-character equal to $\kappa$. Then $X$ has a far $n$-linked collection of clopen sets.
\end{propo}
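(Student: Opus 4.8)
The plan is to mimic the proof of Proposition \ref{nlinkedfarpicharischar} as closely as possible, replacing the role played by $\chi(x,X)$ (which there equaled the $\pi$-character) with the common value $\kappa$ of the $\pi$-character at every point. First I would fix an enumeration $\{B_\alpha:\alpha<\kappa\}$ of a base for $X$ satisfying an analogue of conditions (i) and (ii): the crucial requirement is that for every $\beta<\kappa$ there is $\gamma<\kappa$ of the same cardinality so that the Boolean algebra generated by $\{B_\alpha:\alpha<\beta\}$ lives inside $\{B_\alpha:\alpha<\gamma\}$. Here I would exploit $\mathrm{cf}(\kappa)>\omega$ to guarantee that any countable collection of ordinals below $\kappa$ is bounded below $\kappa$; this is precisely what lets the recursive choice of the $\lambda(D,x,m)$ stay below $\kappa$ at each of the countably many steps.

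Next, given an infinite closed discrete $D\subset X$, I would build nested clopen neighborhoods $U(D,x,m)$ of each $x\in D$ together with ordinals $\lambda(D,x,m)<\kappa$ satisfying the verbatim analogues of (a)--(e), using that the $\pi$-character at $x$ equals $\kappa$ to find, at each stage, a witness $W$ against the collection $\{B\cap U(D,x,m):B\in\B_{\lambda(D,x,m)}\}$ being a $\pi$-base at $x$ — this collection has size strictly less than $\kappa$, so it cannot be a $\pi$-base. I would then set $F(D,m)=X\setminus\bigcup\{U(D,x,m+1):x\in D\}$, a clopen set missing $D$ and increasing in $m$, and claim that the family of all $F(D,n)$ for infinite closed discrete $D$ is a far $(n+1)$-linked collection. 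Faithfulness to being far is immediate since each $F(D,n)$ misses $D$.

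The $(n+1)$-linked verification is the heart of the argument and I expect it to be the main obstacle, though the mechanism transfers directly from Proposition \ref{nlinkedfarpicharischar}. Given $D_0,\dots,D_n$, I would run the same greedy descent: choose $d_0\in D_0$ minimizing $\lambda(D_0,d_0,1)$, form $W_0=U(D_0,d_0,0)\setminus U(D_0,d_0,1)$, and at stage $k$ pass to the next datum $D_{k+1}$ whose neighborhood meets $W_k$, using minimality of $\lambda(k)$ to ensure $W_k\in\B_{\lambda(D_{k+1},d_{k+1},k+1)}$ and then properties (d),(e) to carve out a non-empty $W_{k+1}\subset\bigcap\{F(D_i,i):i\le k+1\}$. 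The place where $\mathrm{cf}(\kappa)>\omega$ does real work — as opposed to the earlier proof where $\chi(x,X)$ itself was the relevant regular-ish bound — is in keeping all these $\lambda$-ordinals strictly below $\kappa$ simultaneously; a single point can now have character strictly above its $\pi$-character $\kappa$, so I must argue that the finitely many (at most $n+1$) descent steps, each selecting an ordinal below $\kappa$, never force a bound reaching $\kappa$. Since only finitely many steps occur in the linkedness argument and each individual $\lambda(D,x,m)<\kappa$, this causes no difficulty; the uncountable cofinality is genuinely needed only in the countable recursion defining the $U(D,x,m)$, to keep $\sup_m\lambda(D,x,m)<\kappa$ available at limit-free countable stages. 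After verifying $(n+1)$-linkedness I would conclude exactly as before that the collection is far and $(n+1)$-linked, completing the proof.
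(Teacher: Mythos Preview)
Your plan has a genuine gap right at the start. You propose to fix an enumeration $\{B_\alpha:\alpha<\kappa\}$ of a \emph{base} for $X$ satisfying the analogue of (i), namely that $\{B_\alpha:\alpha<\kappa\}$ contains a local base at each point. But the hypothesis only gives $\pi\chi(x,X)=\kappa$ for every $x$; as you yourself note later, the character $\chi(x,X)$ may strictly exceed $\kappa$, and hence the weight of $X$ may exceed $\kappa$ as well. In that case no base of size $\kappa$ exists, so your enumeration cannot be arranged. If instead you take a $\pi$-base of size $\kappa$, then the clopen neighborhoods $U(D,x,m)$ of $x$ need not appear anywhere in the list $\{B_\alpha:\alpha<\kappa\}$, and the verbatim analogues of conditions (c) and (d) from Proposition~\ref{nlinkedfarpicharischar} --- which require $U(D,x,m)$ itself to lie in some $\B_\lambda$ --- simply cannot be satisfied. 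Consequently, in the linkedness argument your set $W_0=U(D_0,d_0,0)\setminus U(D_0,d_0,1)$ has no reason to belong to $\B_{\lambda(D_1,d_1,1)}$, and property (e) cannot be invoked at the next step.

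The paper's proof works around exactly this obstruction. It enumerates only a $\pi$-base in type $\kappa$ and does \emph{not} ask that the $U(D,x,m)$ come from the enumeration. Instead it introduces, for each $m$, a countable index set $N(D,x,m)\subset\kappa$ so that the ring $U(D,x,m)\setminus U(D,x,m+1)$ is the union $\bigcup\{B_\alpha:\alpha\in N(D,x,m)\}$, and condition (d) is modified to say that the Boolean algebra generated by $\B_{\lambda(D,x,m)}\cup\{B_\alpha:\alpha\in N(D,x,m)\}$ sits inside $\B_{\lambda(D,x,m+1)}$. The hypothesis $\mathrm{cf}(\kappa)>\omega$ is then used to bound the countable set $N(D,x,m)$ below $\kappa$, so that such a $\lambda(D,x,m+1)<\kappa$ exists. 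In the $(n+1)$-linked verification, rather than taking $W_{k+1}=(W_k\cap U)\setminus U'$, one picks a single $\pi$-base element $B_{\alpha(k+1)}$ with $\alpha(k+1)\in N(D_{k+1},d_{k+1},k+1)$ meeting $W_k$ and sets $W_{k+1}=W_k\cap B_{\alpha(k+1)}$; this is what keeps all the $W_k$ inside the enumerated Boolean algebras. That bookkeeping device --- replacing ``$U$ is in the base'' by ``the ring outside $U$ is a countable union of $\pi$-base elements'' --- is the missing idea in your proposal.
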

\begin{proof}
Consider a $\pi$-base $\{B_\alpha:\alpha<\kappa\}$ of $X$ such that every time $\omega\leq\beta<\kappa$ there is $\gamma<\kappa$ such that the Boolean algebra generated by $\{B_\alpha:\alpha<\beta\}$ is contained in $\{B_\alpha:\alpha<\gamma\}$. Let us denote $\B_\beta=\{B_\alpha:\alpha<\beta\}$ for all $\beta<\kappa$.

Now fix some infinite discrete closed set $D\subset X$. Let $\{U(D,x,0):x\in D\}$ be a discrete collection of pairwise disjoint clopen subsets of $X$ such that $x\in U(D,x,0)$ for each $x\in D$. For each $x\in D$ and $m<\omega$ we shall find $\lambda(D,x,m)<\kappa$, $U(D,x,m)$ a clopen neighbordood of $x$ and a countable subset $N(D,x,m)\subset\lambda(D,x,m)$ such that the following conditions hold.
\begin{itemize}
\item[(a)] $\lambda(D,x,m)<\lambda(D,x,m+1)$ for all $m<\omega$,
\item[(b)] $U(D,x,m+1)\subsetneq U(D,x,m)$ for all $m<\omega$,
\item[(c)] $U(D,x,m)\setminus U(D,x,m+1)=\bigcup\{B_\alpha:\alpha\in N(D,x,m)\}$ for all $m<\omega$,
\item[(d)] the Boolean algebra generated by $\B_{\lambda(D,x,m)}\cup\{B_\alpha:\alpha\in N(D,x,m)\}$ is contained in $\B_{\lambda(D,x,m+1)}$ for all $m<\omega$, and
\item[(e)] if $V\in\B_{\lambda(D,x,m)}$ is such that $V\cap U(D,x,m)\neq\emptyset$, then $(V\cap U(D,x,m))\setminus U(D,x,m+1)\neq\emptyset$.
\end{itemize}

Start by choosing $\lambda(D,x,0)<\kappa$ arbitarily. Now assume that we have constructed everything for $m\leq k$ and that conditions (a) to (e) are satisfied so far. The collection of all $\{B_\alpha:\alpha<\lambda(D,x,k)\}$ is not a local $\pi$-base at $x$ so we may choose a clopen set $U(D,x,k+1)\subset U(D,x,k)$ such that (e) holds. Now choose $N(D,x,k)$ such that (c) holds. Finally, $\lambda(D,x,k+1)$ is defined such that (d) holds, and this can be done by the fact that ${cf}(\kappa)>\omega$ and the choice of our $\pi$-base. This proves that the construction is possible.

Now define 
$$
F(D,m)=X\setminus(\bigcup\{U(D,x,m+1):x\in D\}),
$$
for every closed discrete set $D\subset X$. Clearly, the collection of all $F(D,m)$ where $D$ is infinite, closed and discrete and $m<\omega$ is fixed is a remote collection of clopen sets. Next we will see that it is $(n+1)$-linked when $m=n$.

So let $\D$ be a collection of $n+1$ infinite closed discrete subsets of $X$. Choose $d_0\in D_0\in\D$ be such that $\lambda(D_0,d_0,1)\leq\lambda(D,x,1)$ every time that $D\in\D$ and $x\in D$. Let $\lambda(0)=\lambda(D_0,d_0,1)$. Choose $\alpha(0)\in N(D_0,x_0,0)$ arbitrarily and notice that $W_0=B_{\alpha(0)}$ is a non-empty clopen subset of $F(D_0,0)$ in $\B_{\lambda(0)}$.

First consider the case that for all $D\in\D\setminus\{D_0\}$ and $x\in D$ we had that $W_0\cap U(D,x,1)=\emptyset$. Then $W_0$ is a non-empty subset of $\bigcap\{F(D,n):D\in\D\}$. So assume that this is not the case. Then there are $d_1\in D_1\in\D\setminus\{D_0\}$ such that $W_0\cap U(D_1,d_1,1)\neq\emptyset$ and $\lambda(D_1,d_1,2)\leq\lambda(D,x,2)$ every time that $D\in\D\setminus\{D_0\}$, $x\in D$ and $W_0\cap U(D,x,1)\neq\emptyset$.

Notice that $W_0\in \B_{\lambda(D_1,d_1,1)}$ by the minimality of $\lambda(0)$. Let $\lambda(1)=\lambda(D_1,d_1,2)$. By properties (c) and (e), there exists $\alpha(1)\in N(D_1,x_1,1)$ such that $W_1=W_0\cap B_{\alpha(1)}\neq\emptyset$. Notice that $W_1\in\B_{\lambda(1)}$ by property (d). Thus, $W_1\subset F(D_0,0)\cap F(D_1,1)$ so in particular $F(D_0,1)\cap F(D_1,1)\neq\emptyset$.

So in general, assume that we have $W_k\subset\bigcap\{F(D_i,i):i\leq k\}$, where $\lambda(i)=\lambda(D_i,d_i,i+1)\leq\lambda(D,x,i+1)$ for all $D\in\D\setminus\{D_j:j<i\}$, $x\in D$ such that $W_i\cap U(D,x,i+1)\neq\emptyset$ for all $i\leq k$. Further, $W_{i+1}\subset W_i$ for all $i<k$. So, if possible, again let $D_{k+1}\in\D\setminus\{D_i:i\leq k\}$ and $d_{k+1}\in D_{k+1}$ such that $\lambda(D_{k+1},d_{k+1},k+2)$ is minimal. By the minimality of $\lambda(k)$, $W_k\in\B_{\lambda(D_{k+1},d_{k+1},k+1)}$. Let $\lambda(k+1)=\lambda(D_{k+1},d_{k+1},k+2)$. 

By properties (c) and (e), there exists $\alpha(k+1)\in N(D_{k+1},x_{k+1},k+1)$ such that $W_{k+1}=W_k\cap B_{\alpha(k+1)}\neq\emptyset$ and $W_{k+1}\in\B_{\lambda(k+1)}$ by property (d). Thus, we obtain that $W_{k+1}\subset\bigcap\{F(D_i,i):i\leq k+1\}$. 

So with this procedure we obtain that $\bigcap\{F(D_i,i):i\leq n\}\neq\emptyset$. Since $F(D_i,i)\subset F(D_i,n)$ for all $i\leq n$, we obtain that $\bigcap\{F(D_i,n):i\leq n\}\neq\emptyset$. Thus, our far collection is $(n+1)$-linked, which was what we had left to prove.
\end{proof}

\begin{thm}\label{faruncountblecof}
Let $\kappa$ be a cardinal of uncountable cofinality. Then every countable regular crowded space with all points of $\pi$-character equal to $\kappa$ has far points.
\end{thm}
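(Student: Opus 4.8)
The plan is to deduce this as a corollary of the preceding Proposition together with the combination method recorded at the beginning of this section. First I would reduce to splitting $X$ into countably many clopen crowded pieces. Recall that every countable regular space is zero-dimensional, so $X$ has a base of clopen sets; since $X$ is moreover crowded, every nonempty clopen subset of $X$ is again countable, regular, crowded and infinite. Enumerating $X=\{x_k:k<\omega\}$, I would recursively peel off clopen pieces: at stage $k$, working inside the infinite crowded clopen remainder, choose a proper nonempty clopen set $X_k$ containing the first not-yet-covered point, which is possible by zero-dimensionality and crowdedness since these provide a clopen neighborhood of that point omitting some other point of the remainder. Keeping the remainder infinite at each step and feeding in the enumeration to guarantee coverage yields a partition $X=\bigoplus_{n<\omega}X_n$ into infinitely many nonempty clopen, hence crowded, pieces; as the $X_n$ are clopen this is exactly a free topological sum.

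Next I would check that each $X_n$ satisfies the hypotheses of the preceding Proposition. As $X_n$ is clopen in $X$, it is countable, regular and crowded, and the $\pi$-character at a point of $X_n$ is computed from neighborhoods contained in the open set $X_n$, hence agrees with its $\pi$-character in $X$; so every point of $X_n$ has $\pi$-character $\kappa$. Applying the preceding Proposition to $X_n$ with parameter $n+1$ produces an $(n+1)$-linked collection $\G_n$ of clopen subsets of $X_n$ that is far, in the sense that every closed discrete subset of $X_n$ is missed by some member. By the method recorded at the start of the section, the collection
$$
\G=\Big\{\bigcup\{G_n:n<\omega\}:G_n\in\G_n\text{ for all }n<\omega\Big\}
$$
is then a far collection of clopen subsets of $X$, and any point of $\bigcap\{\cl[\beta X]{G}:G\in\G\}$ is a far point of $X$. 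It only remains to see this intersection is nonempty, which follows from compactness of $\beta X$ once $\G$ has the finite intersection property.

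The finite intersection property is where the linkedness indices must be matched carefully, and this is the one spot I would verify in detail. Given $G^1,\dots,G^m\in\G$ with $G^j=\bigcup_n G^j_n$, I would note $\bigcap_{j\le m}G^j=\bigcup_n\bigcap_{j\le m}G^j_n$ and look at coordinate $n=m-1$: since $\G_{m-1}$ is $m$-linked, $\bigcap_{j\le m}G^j_{m-1}\neq\emptyset$, so the whole intersection is nonempty. That each $G\in\G$ is clopen in $X$ (its complement $\bigcup_n(X_n\setminus G_n)$ is again open, using the free-sum structure) is what guarantees, via normality of $X$, that a closed discrete $D$ disjoint from a suitably chosen $G$ has $\cl[\beta X]{D}$ disjoint from $\cl[\beta X]{G}$, so the resulting limit point is genuinely far. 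In truth the substantive work for this theorem was already carried out in the preceding Proposition; the only genuinely new step here is the clopen partition, and even that is routine given zero-dimensionality, so I expect no serious obstacle beyond the bookkeeping of the index shift and the verification that the hypotheses descend to the clopen summands.
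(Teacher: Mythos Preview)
Your proposal is correct and follows exactly the approach the paper intends: the paper does not give an explicit proof of this theorem, treating it as an immediate consequence of the preceding Proposition together with the combination method recorded at the start of the section (partition into clopen pieces, apply the Proposition with parameter $n$ on the $n$th piece, and take the resulting centered far family). Your write-up in fact supplies more detail than the paper does, including the explicit clopen partition and the index check for the finite intersection property, both of which the paper leaves implicit.
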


Fortunately, we are able to obtain far points with a little more generality than this. We need the following easy lemma, we will leave the proof to the reader.

\begin{lemma}\label{smallpicharisclosed}
Let $X$ be a countable regular space and let $\kappa>\omega$ be a cardinal with ${cf}(\kappa)>\omega$. Then the set of all points $x\in X$ with $\pi$-character $<\kappa$ is closed in $X$. 
\end{lemma}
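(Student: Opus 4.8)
The plan is to prove directly that $Y=\{x\in X:\pi\chi(x,X)<\kappa\}$ is closed, by fixing an arbitrary $x\in\cl{Y}$ and manufacturing a local $\pi$-base at $x$ of size $<\kappa$, which will force $x\in Y$. Two features of the hypotheses should do all the work: $X$ is countable (so $Y$ is countable), and ${cf}(\kappa)>\omega$ (so that a countable union of families of size $<\kappa$ is again of size $<\kappa$). I do not expect to use regularity at all here, so the stated hypothesis is really only inherited from the surrounding results; the argument is purely about how $\pi$-bases propagate along closures.

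The heart of the proof is the following observation. For each $y\in Y$ I would fix a local $\pi$-base $\V_y$ at $y$ with $|\V_y|=\pi\chi(y,X)<\kappa$, and set $\mathcal{P}=\bigcup\{\V_y:y\in Y\}$. I claim $\mathcal{P}$ is a local $\pi$-base at $x$. Each member of $\mathcal{P}$ is a nonempty open set, so the only thing to verify is that every open neighborhood $W$ of $x$ contains some member of $\mathcal{P}$. Since $x\in\cl{Y}$, the set $W$ meets $Y$; choose $y\in W\cap Y$. But then $W$ is itself an open neighborhood of $y$, so by definition of the local $\pi$-base $\V_y$ there is $V\in\V_y$ with $V\subseteq W$, and $V\in\mathcal{P}$. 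The key point is that a $\pi$-base element at a nearby point of $Y$ that sits inside $W$ is automatically ``captured'' by $W$, which is exactly the requirement for a $\pi$-base at $x$; no coherence between the various $\V_y$ is needed.

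It then remains to bound $|\mathcal{P}|$, and this is the one step where the cofinality hypothesis is essential. Because $X$ is countable, $Y$ is countable, so $\mathcal{P}$ is a union of countably many families, each of size $<\kappa$. Since ${cf}(\kappa)>\omega$, the supremum $\mu=\sup\{|\V_y|:y\in Y\}$ of these countably many cardinals below $\kappa$ is itself $<\kappa$, whence $|\mathcal{P}|\leq\omega\cdot\mu<\kappa$ (using $\kappa>\omega$). Therefore $\pi\chi(x,X)\leq|\mathcal{P}|<\kappa$, i.e.\ $x\in Y$, completing the proof that $Y$ is closed. The only genuinely delicate point is this cardinal-arithmetic step: without ${cf}(\kappa)>\omega$ the countable union could a priori reach size $\kappa$ and the argument would break down, so the assumption on the cofinality of $\kappa$ is exactly what makes the conclusion go through.
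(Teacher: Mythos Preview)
Your proof is correct. The paper itself omits the argument entirely (``We need the following easy lemma, we will leave the proof to the reader''), so there is no proof in the paper to compare against; your approach---pulling back a local $\pi$-base at a limit point of $Y$ by assembling the local $\pi$-bases at the countably many points of $Y$, and then invoking ${cf}(\kappa)>\omega$ to bound the size of the union---is exactly the natural one the authors presumably had in mind. Your remark that regularity plays no role is also accurate: the argument uses only that $X$ is countable and that $\kappa$ has uncountable cofinality.
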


\begin{coro}\label{farregular}
If $X$ is a countable regular crowded regular space of $\pi$-weight a cardinal of uncountable cofinality, then $X$ has far points.
\end{coro}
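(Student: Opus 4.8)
The plan is to reduce to Theorem~\ref{faruncountblecof} by locating a clopen subspace all of whose points have $\pi$-character equal to the $\pi$-weight of $X$, and then transferring the resulting far point back to $X$. Write $\kappa$ for the $\pi$-weight of $X$, so $\mathrm{cf}(\kappa)>\omega$ by hypothesis. First I would observe that, since $X$ is countable, the union of fixed local $\pi$-bases over all points of $X$ is a $\pi$-base of $X$; hence $\kappa=\pi w(X)\leq\omega\cdot\sup_{x\in X}\pi\chi(x,X)=\sup_{x\in X}\pi\chi(x,X)$, while the reverse inequality is immediate, so $\sup_{x\in X}\pi\chi(x,X)=\kappa$. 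Because $X$ is countable and $\mathrm{cf}(\kappa)>\omega$, a countable set of cardinals below $\kappa$ cannot have supremum $\kappa$; thus the supremum is attained and the set $U=\{x\in X:\pi\chi(x,X)=\kappa\}$ is non-empty. By Lemma~\ref{smallpicharisclosed} the set $\{x\in X:\pi\chi(x,X)<\kappa\}$ is closed, so $U$ is open.

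Next, using that countable regular spaces are zero-dimensional, I would fix a point $x_0\in U$ and choose a clopen set $V$ with $x_0\in V\subseteq U$. Since $V$ is open, the $\pi$-character of each of its points computed in $V$ agrees with the one computed in $X$; hence every point of $V$ has $\pi$-character $\kappa$ in $V$. As $V$ is clopen in the countable regular crowded space $X$, it is itself countable and regular, and it is crowded because it is open in a crowded space. Theorem~\ref{faruncountblecof} then applies to $V$ and produces a far point $p\in\beta V\setminus V$.

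Finally I would transfer $p$ to $X$. Since $V$ is clopen in $X$ it is $C^\ast$-embedded, so $\cl[\beta X]{V}$ is clopen in $\beta X$, may be identified with $\beta V$, and meets $X$ exactly in $V$; in particular $p\in\cl[\beta X]{V}\setminus V\subseteq\beta X\setminus X$. Given any closed discrete $D\subseteq X$, write $\cl[\beta X]{D}=\cl[\beta X]{D\cap V}\cup\cl[\beta X]{D\setminus V}$. Farness of $p$ from $V$ gives $p\notin\cl[\beta V]{D\cap V}=\cl[\beta X]{D\cap V}$, while $\cl[\beta X]{D\setminus V}$ is contained in the complement of the clopen set $\cl[\beta X]{V}$, which contains $p$; hence $p\notin\cl[\beta X]{D}$, and $p$ is a far point of $X$. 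The step I expect to require the most care is the interplay in the first paragraph: it is precisely the uncountable cofinality of $\kappa$ together with the countability of $X$ that forces the supremum of the $\pi$-characters to be attained, and Lemma~\ref{smallpicharisclosed} that makes the witnessing set $U$ open. Without both facts the clopen subspace $V$ need not exist, and the reduction to Theorem~\ref{faruncountblecof} would break down.
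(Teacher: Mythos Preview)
Your proof is correct and follows essentially the same route as the paper: both arguments use countability of $X$ together with $\mathrm{cf}(\kappa)>\omega$ to see that the set $U=\{x\in X:\pi\chi(x,X)=\kappa\}$ is non-empty, invoke Lemma~\ref{smallpicharisclosed} to see that $U$ is open, pass to a clopen subset, and apply Theorem~\ref{faruncountblecof}. Your write-up is in fact slightly more detailed than the paper's, which leaves the final transfer of the far point from the clopen subspace back to $X$ implicit; your explicit verification of that step via $C^\ast$-embedding is a welcome addition.
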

\begin{proof}
Let $\kappa$ be the $\pi$-weight of $X$. For each $\alpha\leq\kappa$, let $X_\alpha$ be the set of all points in $X$ with $\pi$-character $\leq\alpha$. Then $\{X_\alpha:\alpha\leq\kappa\}$ is an increasing collection of subsets of $X$ with union $X$. Since $X$ is countable and $\kappa$ has uncountable cofinality, there is $\beta<\kappa$ such that $X_\beta=X_\alpha\neq X$ for $\beta\leq\alpha<\kappa$. Let $U=X\setminus X_\beta$. Since $\kappa$ has uncountable cofinality, $X_\beta$ is non-empty and closed by Lemma \ref{smallpicharisclosed}. Thus, $U$ is a non-empty open set of $X$ where all its points has $\pi$-character equal to $\kappa$. Apply Theorem \ref{faruncountblecof} to any clopen subset of $U$ and we are done.
\end{proof}

Unfortunately, we do not have a proof of a version of Corollary \ref{farregular} for countable cofinality. However, we do have the following.

\begin{thm}
Assume that $\kappa>\omega$ is a cardinal with ${cf}(\kappa)=\omega$ and $\c=\kappa\sp{+}$. Then every countable regular crowded space with $\pi$-weight $\kappa$ has far points.
\end{thm}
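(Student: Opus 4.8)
The plan is to stay inside the framework of the two preceding propositions: manufacture, on suitable clopen pieces, far collections that are $(n+1)$-linked, and then glue them with the disjoint-union method described at the start of the section. The genuinely new difficulty is that, because $\mathrm{cf}(\kappa)=\omega$, the stratification by $\pi$-character used in Corollary \ref{farregular} no longer terminates. Fixing regular cardinals $\mu_n\nearrow\kappa$ with $\mathrm{cf}(\mu_n)>\omega$, Lemma \ref{smallpicharisclosed} still tells me that each $C_{\mu_n}=\{x:\pi\chi(x,X)<\mu_n\}$ is closed, but $\{x:\pi\chi(x,X)<\kappa\}=\bigcup_n C_{\mu_n}$ need not be closed, so I cannot simply peel off an open set all of whose points have $\pi$-character $\kappa$ and invoke Theorem \ref{faruncountblecof}. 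I would first dispose of the easy configurations: any clopen piece on which $\chi=\pi\chi$ is handled by Proposition \ref{nlinkedfarpicharischar}, and if a clopen $U\subseteq X$ splits as a disjoint union $\bigsqcup_n U_n$ with every point of $U_n$ of $\pi$-character $\mu_n$, then the uncountable-cofinality construction underlying Theorem \ref{faruncountblecof} yields an $(n+1)$-linked far collection on each $U_n$ and gluing finishes. The residual, hard, case is a clopen subspace in which the points of $\pi$-character exactly $\kappa$ are dense.

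In that case I would imitate the construction of Theorem \ref{faruncountblecof}, building for each infinite closed discrete $D$ and each $x\in D$ a decreasing sequence of clopen neighborhoods $U(D,x,m)$ together with ordinals $\lambda(D,x,m)$ governing the complexity level at which property (e) there is demanded, and setting $F(D,m)=X\setminus\bigcup_{x\in D}U(D,x,m+1)$. The linking argument is finite — to intersect $F(D_0,0),\dots,F(D_n,n)$ one inspects only finitely many of the $\lambda$'s — so the mechanism itself is robust; what fails under countable cofinality is keeping $\lambda(D,x,m)<\kappa$. Concretely, $U(D,x,m)\setminus U(D,x,m+1)$ must be expressed as a countable union of basic sets and absorbed into $\mathcal{B}_{\lambda(D,x,m+1)}$, and the indices occurring can be cofinal in $\kappa$, so a single closing-off step can be forced to reach $\kappa$. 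This is exactly where the hypothesis $\c=\kappa^+$, equivalently $\kappa^{\aleph_0}=\kappa^+$, is meant to enter.

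My proposal for exploiting $\c=\kappa^+$ is to trade the length-$\omega$ recursion over $m$ for a transfinite recursion of length $\kappa^+$ over the closed discrete sets themselves, in the spirit of Proposition \ref{farinordinal}. I would enumerate all infinite closed discrete subsets as $\{D_\xi:\xi<\kappa^+\}$ (there are at most $\c=\kappa^+$ of them) and build clopen $J_\xi$ with $J_\xi\cap D_\xi=\emptyset$ obeying an asymptotic-linking clause analogous to (b) of Proposition \ref{farinordinal}: for every finite $F\subseteq\xi$ and every $n<\omega$ the clopen set $J_\xi\cap\bigcap_{\eta\in F}J_\eta$ contains a point of $\pi$-character at least $\mu_n$. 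Such a collection is centered, so $\bigcap_{\xi<\kappa^+}\cl[\beta X]{J_\xi}\neq\emptyset$ and any point of this intersection is far. Here $\kappa^{\aleph_0}=\kappa^+$ serves as bookkeeping: each clopen neighborhood is coded by a countable subset of the $\kappa$-sized $\pi$-base, there are exactly $\kappa^+$ such codes, and the recursion of length $\kappa^+$ can process them with a dominating-type choice at each stage, playing the role that $\d$-domination over a countable index set plays in Proposition \ref{farinordinal}, with the countable cofinal sequence $\mu_n\nearrow\kappa$ now supplying the levels.

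The step I expect to be the main obstacle is the inductive stage of this last recursion: at stage $\xi$ I must produce a single clopen $J_\xi$ that avoids $D_\xi$ yet meets, at cofinally high $\pi$-character level, each of the up to $\kappa$ many finite intersections $\bigcap_{\eta\in F}J_\eta$ with $F\in[\xi]^{<\omega}$. Avoiding $D_\xi$ is cheap, since a closed discrete set is level-bounded on each piece (a counterpart of Lemma \ref{smallpicharisclosed}); the delicate point is hitting a $\kappa$-sized shrinking family simultaneously at cofinally many levels, precisely because $\mathrm{cf}(\kappa)=\omega$ forbids a uniform bound below $\kappa$. Making this step go through, and verifying that the collection so produced is genuinely centered rather than merely pairwise compatible, is where the real work and the essential use of $\c=\kappa^+$ will lie.
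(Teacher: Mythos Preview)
Your high-level architecture matches the paper's: enumerate the closed discrete sets in order type $\kappa^+$ (using $\c=\kappa^+$), build clopen $F_\alpha$ missing $D_\alpha$ by transfinite recursion, and maintain an asymptotic-linking invariant so that the family is centered. But you explicitly leave the inductive step as the ``main obstacle,'' and that step is where the entire content of the proof lies; as written, this is a plan rather than a proof.

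The missing idea is a \emph{level-matching} device that you do not describe. The paper first proves a Claim: there is a discrete family $\{U_n:n<\omega\}$ of nonempty clopen sets such that every point of $U_n$ has $\pi$-character at least $\kappa_n^+$. (This is not automatic; it requires a short argument splitting on whether the closure of $\{x:\pi\chi(x)=\kappa\}$ is compact.) The linking invariant is then stated relative to these $U_n$: for each finite $G\subset\alpha+1$ there is $N_G$ with $U_m\cap\bigcap_{\beta\in G}F_\beta\neq\emptyset$ for all $m\geq N_G$. At stage $\alpha$, one writes $\alpha=\bigcup_n J_n$ with $|J_n|\leq\kappa_n$; on $U_n$ the Boolean algebra generated by $\{F_\beta\cap U_n:\beta\in J_n\}$ has size at most $\kappa_n$, strictly less than the $\pi$-character of every point of $U_n$. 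Hence for each $x_i\in D_\alpha\cap U_n$ one can choose a clopen neighborhood $W_i$ that no nonempty member of this Boolean algebra is contained in, and set $F_\alpha=X\setminus\bigcup_i W_i$. This is the mechanism that lets a single choice handle $\kappa$ many prior constraints despite $\mathrm{cf}(\kappa)=\omega$: on each $U_n$ only $\kappa_n$ of them matter, and the $\pi$-character there beats $\kappa_n$. Your proposal to use a ``dominating-type choice'' analogous to Proposition~\ref{farinordinal} does not supply this; no domination over a countable index set is involved. Likewise, your preliminary case analysis (peeling off clopen pieces where $\chi=\pi\chi$, or where $\pi$-character is constant) is unnecessary---the Claim and recursion work uniformly without it.
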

\begin{proof}
Let $X$ be a countable regular crowded space with $\pi{w}(X)=\kappa$. Let $\{\kappa_n:n<\omega\}\subset\kappa$ be a strictly increasing sequence of cardinals such that $\sup{\{\kappa_n:n<\omega\}}=\kappa$. 
\vskip5pt
\noindent{\it Claim:} There is a discrete pairwise disjoint collection of non-empty clopen sets $\{U_n:n<\omega\}$ of $X$ such that for every $p\in U_n$, $\pi\chi(p)\geq\kappa_n\sp+$. 
\vskip5pt

To prove the Claim, consider the set $P=\{x\in X:\pi\chi(x)=\kappa\}$. For each $n<\omega$, let $P_n=\{x\in X:\pi\chi(x)\leq\kappa_n\}$, which is closed by Lemma \ref{smallpicharisclosed} and the fact that $\kappa_n\sp+$ is regular. We have two cases depending on whether $\cl{P}$ is compact or not. 

If $\cl{P}$ is not compact, then there is an infinite discrete set $\{p_n:n<\omega\}\subset\cl{P}$. We construct a partition $\{A_n:n<\omega\}$ of $X$ into clopen sets such that $p_n\in A_n$. Choose an enumeration $X\setminus\{p_n:n<\omega\}=\{y_n:n<\omega\}$. Then inductively assuming that we have chosen $\{A_n:n<m\}$ for some $m<\omega$ in such a way that $\{y_n:n<m\}\subset\bigcup\{A_n:n<m\}$, let $A_m$ be a clopen set such that $p_m\in A_m$ and $y_m\in\bigcup\{A_n:n\leq m\}$. Clearly with this procedure it is possible to construct such an infinite partition. Notice that $A_n\setminus P_n\neq\emptyset$ for each $n<\omega$. So in this case we may let $U_n$ be a non-empty clopen subset of $A_n\setminus P_n$ for each $n<\omega$.

If $\cl{P}$ is compact, then it must be second countable since it is countable. Thus, there is a point $p\in P$ that is isolated in $\cl{P}$ (but clearly not in $X$). Let $A$ be a clopen set of $X$ such that $A\cap P=\{p\}$. Notice that $A\setminus P_n\neq\emptyset$ for all $n<\omega$. Otherwise $A=P_m\cup\{p\}$ for some $m<\omega$ and $p\in \cl{P_m}=P_m$, which is a contradiction. Recursively construct a pairwise disjoint collection $\{A_n:n<\omega\}$ of clopen subsets of $A$ such that $A_n\subset A\setminus P_n$ for all $n<\omega$. Since $\pi\chi(x)=\kappa>\omega$, there is a clopen set $Q$ such that $x\in Q$ and $A_n\setminus Q\neq\emptyset$ for all $n<\omega$. Thus, for this case let $U_n=A_n\setminus Q$ for each $n<\omega$.

Clearly, in both of the possible cases we have constructed the collection from the Claim.

By the Claim, it is possible to choose a collection $\{D_\alpha:\alpha<\kappa\sp+\}$ of infinite closed discrete subsets of $X$ such that
\begin{itemize}
\item[(i)] $D_\alpha\cap U_n$ is infinite for all $\alpha<\kappa\sp+$ and $n<\omega$ and
\item[(ii)] for every closed discrete set $D\subset X$ there is $\alpha<\kappa\sp+$ such that $D\subset D_\alpha$.
\end{itemize}

By recursion we will construct a centered family $\{F_\alpha:\alpha<\kappa\sp+\}$ of clopen subsets of $X$ such that $D_\alpha\cap F_\alpha=\emptyset$. The filter generated by this familly will be far by property (ii). We will assume that for all $\alpha<\kappa\sp+$ we have the following inductive hypothesis:
\begin{quote}
$(\ast)_{\alpha}$ For every $G\in[\alpha+1]\sp{<\omega}$ there is $N_G<\omega$ such that every time $m\geq N_G$ it follows that $U_m\cap\left(\bigcap\{F_\beta:\beta\in G\}\right)\neq\emptyset$.
\end{quote}

Let $F_0$ be any non-empty clopen set such that $F_0\cap D_0=\emptyset$ and $F_0\cap U_n\neq\emptyset$ for all $n<\omega$. Assume that $\alpha<\kappa\sp+$, that we have constructed $\{F_\beta:\beta<\alpha\}$ and that $(\ast)_\beta$ holds for every $\beta<\alpha$.

Let $\alpha=\bigcup\{J_n:n<\omega\}$ be such that $J_n\subset J_{n+1}$ and $|J_n|\leq\kappa_n$ for all $n<\omega$. Let $\B_\alpha\sp{n}$ denote the Boolean algebra of clopen subsets of $U_n$ generated by $\{F_\beta\cap U_n:\beta\in J_n\}$ for each $n<\omega$. Also, let $D_\alpha=\{x_i:i<\omega\}$ be an precise enumeration and let $\{V_i:i<\omega\}$ be a partition of $X$ into clopen sets such that $\{x_i\}=D_\alpha\cap V_i$ whenever $i<\omega$.

Let $i<\omega$ and $n<\omega$ such that $x_i\in U_n$. Then, since $\pi\chi(x_i)\geq\kappa\sp+_n$, there is a clopen set $W_i\subset U_n\cap V_i$ such that $B\setminus W_i\neq\emptyset$ for all $B\in\B_\alpha\sp{n}\setminus\{\emptyset\}$. Then the set $W=\bigcup\{W_i:i<\omega\}$ is clopen and contains $D_\alpha$. Let $F_\alpha=X\setminus W$.

Now we prove that $(\ast)_\alpha$ holds so let $\beta(0),\ldots,\beta(k)\in\alpha+1$. We may assume that $\beta(k)=\alpha$ and $\beta(j)<\alpha$ if $j<k$. Let $G=\{\beta(j):j<k\}$ and let $\gamma<\alpha$ such that $G\subset\gamma+1$. Since $\gamma<\alpha$, there is some witness $N_G$ for $(\ast)_\gamma$. Let $N<\omega$ be such that $N_G\leq N$ and $\beta(j)\in J_N$ for all $j\leq k$. Let $H=\bigcap\{F_{\beta(j)}:j<k\}$. If $N\leq m<\omega$, by $(\ast)_\gamma$ we have that $H\cap U_m\in B_\alpha\sp{m}\setminus\{\emptyset\}$. Now, let $i<\omega$ be such that $x_i\in U_m$, such an $i$ exists by property (i). By the definition of $W_i$, $(H\cap U_m)\setminus W_i\neq\emptyset$. This easily implies that $\bigcap\{F_{\beta(j)}\cap U_m:j\leq k\}\neq\emptyset$. Thus, $N_{G\cup\{\beta(k)\}}=N$ works as a witness for $(\ast)_\alpha$.

This finishes the construction of $\{N_\alpha:\alpha<\kappa\sp+\}$ and the proof.
\end{proof}

From all the previous results, we have the following.

\begin{coro}\label{farforsmall}
If $\c\leq\aleph_{\omega+1}$, then every regular countable crowded space has far points.
\end{coro}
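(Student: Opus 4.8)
The plan is to read the corollary off from the results of this section by a case analysis on $\kappa=\pi w(X)$. Since $X$ is countable it has at most $\c$ open sets, so $\kappa\le\c\le\aleph_{\omega+1}$, and since $X$ is crowded every point has $\pi$-character at least $\omega$, while $\pi\chi(x)\le\kappa$ always. First I would dispose of the case $\mathrm{cf}(\kappa)>\omega$, where Corollary \ref{farregular} applies directly and produces far points. The only remaining possibilities are then the cardinals $\le\aleph_{\omega+1}$ of countable cofinality, namely $\omega$ and $\aleph_\omega$.

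Next I would treat $\kappa=\aleph_\omega$, where the hypothesis $\c\le\aleph_{\omega+1}$ does the essential work of forcing $\c=\kappa^+$. Indeed $\aleph_\omega=\kappa\le\c\le\aleph_{\omega+1}$, and König's theorem gives $\mathrm{cf}(\c)>\omega$, so $\c\ne\aleph_\omega$ and hence $\c=\aleph_{\omega+1}=\aleph_\omega^+=\kappa^+$. With $\kappa>\omega$, $\mathrm{cf}(\kappa)=\omega$ and $\c=\kappa^+$ all available, the preceding theorem applies and yields far points.

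The remaining case $\kappa=\omega$ is the genuinely delicate one. Here every point has $\pi$-character exactly $\omega$, but one must resist invoking Theorem \ref{farpicharischar}, since $\pi w(X)=\omega$ does not force $\chi=\pi\chi$: for instance $\Q\cup\{p\}$, with $p\in\beta\Q\setminus\Q$ chosen in the closure of the set of integers, is countable, regular and crowded and has a countable $\pi$-base, yet $\chi(p)\ge\omega_1$. Instead I would rerun the construction behind Theorem \ref{faruncountblecof} (that is, the proposition preceding it) with $\kappa=\omega$. Its hypothesis $\mathrm{cf}(\kappa)>\omega$ is used only to keep the auxiliary indices $\lambda(D,x,m)$ below $\kappa$; when $\kappa=\omega$ every Boolean combination occurring in that recursion involves only finitely many $\pi$-base elements, so these indices stay finite automatically. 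Since $\pi\chi(x)=\omega$ at each point guarantees that no finite family of open sets is a local $\pi$-base at $x$, every step goes through verbatim, producing $(n+1)$-linked far collections on the pieces of a clopen partition of $X$ and, by the method described at the start of this section, far points of $X$.

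I expect $\kappa=\omega$ to be the main obstacle, as it is the one value not covered by a theorem as stated and the one where the naive reduction to $\chi=\pi\chi$ breaks down. An alternative worth keeping in reserve is to find a clopen subspace of $X$ homeomorphic to $\Q$ and then apply Theorem \ref{farpicharischar} to it, using that a non-compact clopen subspace $U$ with a far point yields a far point of $X$ (for clopen $U$ one has $\cl[\beta X]{U}=\beta U$, and a far point of $U$ cannot lie in the closure of any closed discrete subset of $X$). However, this route needs the first-countable points of $X$ to be somewhere dense, which is exactly the unclear point; so I would base the argument on the construction above rather than on this reduction.
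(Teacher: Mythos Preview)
Your case split on $\kappa=\pi w(X)\le\c\le\aleph_{\omega+1}$ is exactly the right one, and your handling of the cases $\mathrm{cf}(\kappa)>\omega$ (via Corollary~\ref{farregular}) and $\kappa=\aleph_\omega$ (K\"onig's theorem forces $\c=\aleph_{\omega+1}=\kappa^+$, so the preceding theorem applies) is correct and is what the paper intends.

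The problem is your treatment of $\kappa=\omega$. You are right that Theorem~\ref{farpicharischar} does not apply directly and your example $\Q\cup\{p\}$ shows why. But the claim that the proposition behind Theorem~\ref{faruncountblecof} goes through ``verbatim'' for $\kappa=\omega$ is not justified. Condition~(c) there writes the clopen difference $U(D,x,m)\setminus U(D,x,m+1)$ as a union $\bigcup\{B_\alpha:\alpha\in N(D,x,m)\}$ of $\pi$-base elements; since a $\pi$-base need not be a base, there is no reason this union should be finite. If $N(D,x,m)$ is infinite, condition~(d) then forces $\lambda(D,x,m+1)\ge\omega$, so the indices do \emph{not} stay finite automatically. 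The recursion can be repaired---enlarge the countable $\pi$-base to the countable Boolean algebra it generates, and at each stage choose $U(D,x,k+1)$ as $U(D,x,k)$ minus finitely many $\pi$-base elements, one inside each $(V\cap U(D,x,k))\setminus\{x\}$---but this is additional work, not a verbatim rerun.

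There is a much shorter route for this case, already available from the introduction: any ccc non-pseudocompact space of $\pi$-weight $\omega$ has remote points (Chae--Smith~\cite{chae-smith}, van Douwen~\cite{vd51}), and remote points are far. A countable regular crowded space is trivially ccc; it is non-pseudocompact because it is Lindel\"of but not compact, hence not countably compact, hence contains an infinite closed discrete set, and normality then yields an unbounded continuous real-valued function. This disposes of $\kappa=\omega$ in one line and, together with your other two cases, gives the corollary.
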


\end{document}